\title{Day algebras}
\author{Edmund Robinson and Joshua Wrigley}
\thanks{The first author would like to acknowledge the support of the UK EPSRC research grant EP/R006865/1.  The second author acknowledges the support of the Agence Nationale de la Recherche (ANR), project ANR-23-CE48-0012-01.
}
\date{}
\dedicatory{This paper is dedicated to Pino Rosolini, bringer of joy.}
\keywords{Day convolution, separation logic, hybrid logic, linear temporal logic, profunctors.}
\subjclass{Primary: 18M60, 18D60; Secondary: 03B70, 18C50, 68Q55, 68Q60.}
\address[Edmund Robinson]{School of Electronic Engineering and Computer Science, Queen Mary University of London,  London,  E1 4FZ, UK.}
\email{e.p.robinson@qmul.ac.uk}
\urladdr{https://www.eecs.qmul.ac.uk/~edmundr/}
\address[Joshua Wrigley]{IRIF, Université Paris Cité, Paris, 75013, France.}
\email{wrigley@irif.fr}
\urladdr{https://jlwrigley.github.io/}
\tikzset{> =stealth}
\theoremstyle{plain}
\newtheorem{thm}{Theorem}[section]
\newtheorem{lem}[thm]{Lemma}
\newtheorem{coro}[thm]{Corollary}
\newtheorem{prop}[thm]{Proposition}
\theoremstyle{definition}
\newtheorem{df}[thm]{Definition}
\newtheorem{rem}[thm]{Remark}
\newtheorem{ex}[thm]{Example}
\renewcommand{\phi}{\varphi}
\newcommand{\N}{\mathbb{N}}
\newcommand{\Ccat}{\mathbb{C}}
\newcommand{\Dcat}{\mathbb{D}}
\newcommand{\Ecat}{\mathbb{E}}
\renewcommand{\H}{\mathbb{H}}
\newcommand{\W}{{W}}
\renewcommand{\a}{{\bf a}}
\newcommand{\Set}{{\bf Set}}
\newcommand{\Prof}{{\bf Prof}}
\newcommand{\Cat}{{\bf Cat}}
\newcommand{\1}{\mathbb{1}}
\newcommand{\End}{{\bf End}}
\renewcommand{\partial}{{\rm par}}
\newcommand{\Span}{{\bf Span}}
\newcommand{\pb}{{\rm pb}}
\newcommand{\op}{^{\rm op}}
\newcommand{\id}{1}
\newcommand{\2}{\mathbb{2}}
\newcommand{\slashedrightarrow}{\, {\mathrel{\ooalign{\hss$\vcenter{\hbox{\tikz{\draw (0,4.5pt) -- (0,0) ;}}}\mkern2.75mu$\hss\cr$\to$}}}\, }
\newcommand\sepimp{\mathrel{-\mkern-6mu*}}
\begin{document}
	
	\begin{abstract}
		In this paper we show that the Day monoidal product generalises in a straightforward way to other algebraic constructions and partial algebraic constructions on categories. This generalisation was motivated by its applications in logic, for example in hybrid and separation logic. We use the description of the Day monoidal product using profunctors to show that the definition generalises to an extension of an arbitrary algebraic structure on a category to a pseudo-algebraic structure on a functor category. We provide two further extensions. First we consider the case where some of the operations on the category are partial, and second we show that the resulting operations on the functor category have adjoints (they are residuated). 
	\end{abstract}

	\maketitle

	
	\section*{Introduction}

At its heart, this is a paper that could have been written some time ago.  In 1970, Brian Day \cite{day} showed that a monoidal structure on a (small) category $\Ccat$ produced a corresponding monoidal structure on the functor category $[\Ccat \to \Set]$.  This construction is now known as the Day monoidal product, or Day convolution. 

Although Day's construction is often expressed using technology familiar only to category theorists, when we view it from an abstract perspective its existence is not really a surprise. The topos $[\Ccat \to \Set]$ can be viewed as a completion by colimits of $\Ccat\op$, and so it is not a surprise that structure on $\Ccat\op$ extends to structure on the completion. But it is not completely straightforward. Given functors $F,G: \Ccat \to \Set$, we want to define $F\otimes_D G: \Ccat \to \Set$. The data we have is 
\[\begin{tikzcd}
				{\Ccat \times \Ccat} & \Ccat \\
				{\Set \times \Set} & \Set
				\arrow["\otimes", from=1-1, to=1-2]
				\arrow["{\langle F, G \rangle}"', from=1-1, to=2-1]
				\arrow["\times", from=2-1, to=2-2]
			\end{tikzcd}\]
These functors do not compose, but there is a canonical choice of functor $\Ccat\to\Set$ on the right, given by left Kan extension, and this is how the Day monoidal product is defined. The left Kan extension can also be presented as a coend, which is how the Day monoidal product is often itself presented. 

The purpose of this paper is to point out that Day's construction generalises to other algebra structures on $\Ccat$, and to discuss how it generalises when the operations in those structures are partial. 

Others have also pointed out that if we translate this diagram into the language of profunctors, an idea already present in Day's work of 1973 \cite{day2006closed}, then since the bicategory of profunctors is self-dual, we can reverse the direction of the $\otimes$, and take the composite. This gives a third equivalent definition of Day convolution.  But then we have a construction that involves operations on $\Ccat$ followed by a sequence of functors in parallel, followed by products in $\Set$, and that makes it obvious that Day's construction generalises to arbitrary algebra structures on $\Ccat$.

This is a significant fact in the context of categorical logic. Most approaches to logic deal with relations $M\vDash P$, expressing the validity of a logical proposition $P$ in a context $M$. Such relations always introduce a partial order on contexts: $M \leqslant M'$ if and only if for all (possibly for all atomic) $P$, if $M\vDash P$ then $M'\vDash P$, and then the interpretation of (atomic) propositions is functorial from the partially ordered set of contexts to the partial order $\2 = (\bot \leqslant \top)$. One particularly important instance of this is the construction of the Kripke models, used to give semantics to intuitionistic and other forms of logic. In a Kripke model, the context $M$ is described as a possible world $w$ and the relation $w\vDash P$ is written $w\Vdash P$. The set of worlds $W$ carries a partial order, and, for logics that are intuitionistic rather than classical, interpretations are expected to satisfy the monotonicity property: if $w\leqslant w'$ and $w\Vdash P$, then $w'\Vdash P$. 
As a result, the logic of the Kripke model lives naturally in the internal logic of the topos $[W\to\Set]$. The interpretation of propositions corresponds to subobjects of the terminal object, and the interpretation of the propositional connectives is as in the topos of functors, e.g.\ $w \Vdash P \rightarrow Q$ if and only if, for each $w \leqslant v$, $v \Vdash P$ implies that $v \Vdash Q$ (see e.g.\ \cite{modallogic} or \cite{goldblatt2014topoi}, and for modal operations, i.e.\ $\square$ and $\lozenge$, see \cite{kavvos}).

Viewed from this perspective, the question we are addressing is: ``how does an algebra structure on the category of worlds translate into the interpretation of a novel logical connective that can be used to reason about propositions with Kripke semantics?''

This is not an abstract question. One of the most important forms of computational logic developed in recent years is \emph{separation logic} \cite{ishtiaq-ohearn,Ohearn:ACM-separation-logic}. Separation logic is designed to reason about programs running in the standard execution model for modern languages that employs pointers and heap structures. Simplifying slightly, the semantics for separation logic is given by a Kripke model in which the worlds are \emph{heaps}. In the original version they form a discrete category, but in intuitionistic variants they are ordered by inclusion. We reason about a relation $h\Vdash \phi$, where $h$ is a heap, and $\phi$ is a property of it. The key innovation of separation logic is the separating conjunction $\ast$, which is interpreted as the Day monoidal product for a binary operation $\sqcup$ representing the union of \emph{disjoint} heaps. In the context of propositional logic, the coend formula simplifies, and we have that
\[h\Vdash \phi\ast\psi \iff \exists h_1,h_2.\ h_1 \sqcup h_2 \leqslant h\mbox{ and }h_1\Vdash\phi\mbox { and }h_2\Vdash\psi.\]
However, $\sqcup$ is a partial operation. It is only defined when the heaps involved are disjoint. This is to ensure that a properties true of two sub-heaps can be combined to make a property true of their union without requiring a side condition of consistency on the intersection. That in turn feeds into the ability to reason structurally about large systems. Given a small piece of code, properties can be determined with respect to the part of the heap that it actually touches, and then extended to properties valid for the whole codebase. This is one of the key technical properties that enables the verification of code at scale in the context of an entire codebase.  This definition itself lives in the context of a logic, \emph{bunched implication logic} (BI), which has a standard interpretation in Kripke semantics over a resource monoid \cite{PYM2004257} where there are two notions of combining resources, sharing, corresponding to conventional logic, and non-sharing, corresponding to a substructural logic. 

This is not the only logic interpreted in a Kripke structure where the clauses follow a similar pattern.  \emph{Hybrid logic} includes a syntax for naming worlds \cite{blackburn2014henkin}, for example a unary operation $@_w$ on propositions where 
\[w' \Vdash @_{w}\phi \iff w\Vdash \phi\]
Hybrid and separation operations can be seen combined in recent work on reasoning about advanced algorithms in extensions of concurrent separation logic (see e.g.\ \cite{Brotherston:reasoning-over-permissions}).

\subsection*{Overview}
The paper is laid out as follows. 
\begin{enumerate}
    \item In \cref{sec:preliminaries}, we recall the basic technologies we will be using, namely profunctors and operads. We choose operads to formalise multi-variable equational theories as they enable relatively clear accounts of both the essential treatment of substitution and the coherence required when moving from a purely equational to a pseudo-algebraic theory. 

    \item In \cref{sec:day_algebras} we give the basic treatment for extension of algebra structures. The key result is \cref{thm:day_ext_is_operadic}, which establishes a morphism of operads: 		
\[(-)_D \colon \End(\Ccat) \to \End([\Ccat \to \Set])\op,\]
from which the extension of any given algebra structure on $\Ccat$ follows as an immediate corollary. 

    \item \cref{sec:partial} contains two extensions, one to partial operations and the other to adjoints of them, establishing that these operations are ``residuated'', they support forms of implication, just like the Day monoidal product.  We also discuss in more detail how the logical connectives of hybrid, linear temporal and separation logic are recovered.  As explained further in \cref{remarks_on_spans}, we must be precise about how we interpret partial algebra structure, as our construction only preserves \emph{strong} equality of partial operations (see \cref{df:stongequality}).
\end{enumerate}
We conclude with an Appendix giving details of the coherence required.

	\section{Preliminaries}\label{sec:preliminaries}
	
	In this paper, a category $\Ccat$ is assumed to be \emph{small}.
	If $\Ccat$ and $\Dcat$ are a pair of categories, $\Ccat \times \Dcat$ will always refer to their product \emph{as categories}, i.e.\ the product in the (2-)category of categories $\Cat$.  This will be important when we consider other (bi-)categories whose objects are categories but whose arrows are not functors.  Similarly, $\Ccat^n$ will refer to the $n$-fold product of $\Ccat$ with itself and $\1$ for the one object, one arrow category, i.e.\ the empty product.

    We will also compromise between categorical and type-theoretic traditions by using $[\Ccat\to\Dcat]$ (in square brackets) to denote the category of functors from $\Ccat$ to $\Dcat$. 
	
	\subsection{Profunctors}
	Our generalisation of the Day monoidal product will be constructed using \emph{profunctors}, which we recall here.
	\begin{df}
		Let $\Ccat$ and $\Dcat$ be categories.  A \emph{profunctor} $F \colon \Ccat \slashedrightarrow \Dcat$ is a functor $F \colon \Dcat\op \times \Ccat \to \Set$.
	We will use $\Ccat \to \Dcat$ to denote a functor and $\Ccat \slashedrightarrow \Dcat$ to denote a profunctor.
    
	   Profunctors generalise relations. If the categories $\Ccat$ and $\Dcat$ are discrete, and then a profunctor that takes values in the subterminals $\2 \subseteq \Set$ corresponds to a relation between the object sets. Profunctors can be composed using a \emph{coend} \cite[\S IX.6]{CWM}, which in the special case above is precisely the relational composite.  
		
        Given a pair of profunctors $F \colon \Ccat \slashedrightarrow \Dcat$, $G \colon \Dcat \slashedrightarrow \Ecat$, their composite $G \circ F \colon \Ccat \slashedrightarrow \Ecat$ acts on objects $(e,c) \in \Ecat\op \times \Ccat$ by the coend
		\[
		G \circ F(e,c) = \int^{d \in \Dcat} G(e,d) \times F(d,c).
		\]
		Concretely, this is the quotient set
		\[
		\left(\coprod_{d \in \Dcat} G(e,d) \times F(d,c) \right){/{\sim}}
		\]
		where $\sim$ is the equivalence relation generated by $(g,F(\alpha,\id_c)(f)) \sim (G(\id_c,\alpha)(g),f)$, for all $f \in F(d,c)$, $g \in G(e,d')$ and $\alpha \colon d \to d' \in \Dcat$.  On arrows $(\beta,\gamma) \colon (e,c) \to (e',c')$, $G \circ F \colon \Ccat \slashedrightarrow \Ecat$ acts by the universally induced arrow, which concretely sends the equivalence class of the pair $(g,f) \in G(e,d) \times F(d,c)$ to the equivalence class of the pair $(G(\beta,\id_d)(g),G(\id_d,\gamma)(f))$.
		
		Since coends, and hence the composite $G \circ F$, are only defined up to natural isomorphism, this definition naturally gives rise not to a 2-category but a \emph{bicategory} of profunctors $\Prof$.  A 2-cell $F \Rightarrow G$ between profunctors is just a natural transformation of the underlying functors.  Bicategories generalise 2-categories in that the associative and unit rules are only satisfied up to (coherent) isomorphism (see \cite[\S 1]{benaboubicats}).
	\end{df}
	There are two ways to consider a functor $F \colon \Ccat \to \Dcat$ as a profunctor:
	\begin{enumerate}
		\item $\Dcat(\id_\Dcat,F) \colon \Ccat \slashedrightarrow \Dcat$, the profunctor that sends $(d,c) \in \Dcat\op \times \Ccat$ to $\Dcat(d,F(c))$; and, analogously, 
		\item $\Dcat(F,\id_\Dcat) \colon \Dcat \slashedrightarrow \Ccat$, the profunctor that sends $(c,d) \in \Ccat\op\times\Dcat$ to $\Dcat(Fc,d)$.
	\end{enumerate}
	These definitions extend to bifunctors $\Cat \to \Prof$ and $\Cat\op \to \Prof$.  Importantly, this means that we can apply functors `in reverse' by considering functors as profunctors.
	\begin{ex}\label{ex:composite_of_cat_and_catop}
		Let $F \colon \Ccat \to \Dcat, G \colon \Ecat \to \Dcat$ be functors.  It is an easy exercise to show that the composite
		\[
		\begin{tikzcd}[column sep = large]
			\Ccat \ar["\shortmid"{marking}]{r}{\Dcat(\id_\Dcat,F)} & \Dcat \ar["\shortmid"{marking}]{r}{\Dcat(G, \id_\Dcat)} & \Ecat
		\end{tikzcd}
		\]
		is isomorphic to the profunctor $\Dcat(G,F) \colon \Ecat\op \times \Ccat \to \Set$ that sends $(e,c) \in \Ecat\op \times \Ccat$ to $\Dcat(G(e),F(c))$.
	\end{ex}
	
	Moreover, a functor $P \colon \Ccat \to \Set$, i.e.\ a \emph{co-presheaf}, can be regarded as a functor
	\[
	P \colon \1\op \times \Ccat \to \Set
	\]
	and hence as a profunctor $P \colon \Ccat \slashedrightarrow \1$.  Indeed, the category of co-presheaves, denoted by $[\Ccat \to \Set]$, is isomorphic to the category $\Prof(\Ccat,\1)$.

	\subsection{Operads}  Our applications to universal algebra will be framed in the language of \emph{operad theory}.  An operad formalises the composition of $n$-ary operations.  For the most part, we follow the conventions of \cite[\S 2]{leinsteroperads}.  
	\begin{df}\label{df:operad}
		A \emph{operad} $P$ consists of the following data.
		\begin{enumerate}
			\item For each $n \in \N$, a set $P(n)$, whose elements we call \emph{$n$-ary operations}.
			\item A distinguished element $1 \in P(1)$ called the \emph{unit}.
			\item For each $n,k_1,\dots,k_n \in \N$, a function
			\[
			\circ \colon P(n) \times P(k_1) \times \dots \times P(k_n) \to P(k_1 + \dots + k_n)
			\]
			which we call \emph{multi-composition} (our reasons for this departure in terminology from \cite{leinsteroperads} will be explained in 
            \cref{df:cat-operad}).
		\end{enumerate}
		These must satisfy the \emph{unital} condition that
		\[ \theta \circ (1, \dots ,1) = \theta = 1 \circ \theta ,\]
		for all $\theta \in P(n)$, and the \emph{associativity} condition
		\[
		\theta \circ (\theta_1 \circ (\theta_1^1, \dots , \theta_1^{k_1}), \dots , \theta_n \circ (\theta_n^1 , \dots , \theta_n^{k_n})) = (\theta \circ (\theta_1, \dots , \theta_n)) \circ (\theta_1^1, \dots , \theta_n^{k_n}),
		\]
		for all $\theta, \theta_i , \theta_i^j$ such that these multi-composites exist.
	\end{df}
	\begin{ex}
		Below, we define the \emph{algebras} for an operad and briefly sketch how operad theory is used in universal algebra.  To that end, we describe here the `syntactic operads' that are used to describe single-sorted algebraic theories.  Let $\Sigma$ be a collection of operation symbols with arities, called a \emph{signature}.  We use $P_{\Sigma}$ to denote the operad freely generated by the operations in $\Sigma$: for each $n \in \N$, $P_\Sigma(n)$ is the set of terms over $\Sigma$ with $n$ inputs.  For example, suppose $\Sigma$ consists of a unary operation $\theta$ and a binary operation $\psi$, then $P_\Sigma(1)$ contains $1$, $\theta$, $\theta^2$, $\psi \circ (\theta,\theta)$, etc.  If $E$ is a collection of equations $t_1 = t_2$ between terms over $\Sigma$, i.e.\ an \emph{algebraic theory}, we can quotient each $P_{\Sigma}(n)$ by the congruence generated by $E$ to obtain an operad $P_{(\Sigma,E)}$, the \emph{classifying operad} for the pair $(\Sigma,E)$.  Thus, $P_{(\Sigma,E)}(n)$ is the set of terms over $\Sigma$, with $n$ free variables, up to provable equivalence modulo $E$.
	\end{ex}
	Since we will encounter constructions that are defined only up to (canonical) isomorphism, we require a notion of operad that includes `transformations' between operations.  For our purposes, it suffices to consider operads \emph{enriched} over $\Cat$.  Essentially, we replace the words `set' and `function' in \cref{df:operad} with, respectively, `category' and `functor'.
	\begin{rem}
		Whenever working in higher dimensions, a choice must be made about whether composition/identities/etc.\ are defined up to equality or isomorphism.  We do not work in the greatest generality, instead picking and choosing to suit our needs: our `operads with transformations' will be strict, but their morphisms will be `pseudo'/`lax', in the sense that the unit and multi-composites are preserved up to (coherent) isomorphism/transformation.  In contrast, \emph{pseudo-operads}, where the unital and associativity conditions hold only up to isomorphism, are discussed in \cite{dayross-pseudooperads}.
	\end{rem}
	\begin{df}\label{df:cat-operad}
		A \emph{$\Cat$-operad} $P$ is a $\Cat$-enriched operad, and so consists of:
		\begin{enumerate}
			\item a category $P(n)$, for each $n \in \N$,
			\item a \emph{unit} $1 \in P(1)$,
			\item and, for each $n,k_1,\dots,k_n \in \N$, a functor
			\[
			\circ \colon P(n) \times P(k_1) \times \dots \times P(k_n) \to P(k_1 + \dots + k_n).
			\]
			Naming this functor \emph{multi-composition} distinguishes it from the composition of arrows in $P(n)$.
		\end{enumerate}
		These must satisfy the analogous unital and associativity conditions from \cref{df:operad}.
	\end{df}
	\begin{ex}[Endomorphism operad]\label{ex:operand_End}
		Our archetypal example is the $\Cat$-operad of \emph{$n$-ary endomorphisms} on a category $\Ccat$, which we denote by $\End(\Ccat)$.  For each $n \in \N$, $\End(\Ccat)(n)$ is the category $[\Ccat^n \to \Ccat]$ of functors $\Ccat^n \to \Ccat$ and natural transformations between these.  The unit is the identity functor $\id_\Ccat$.  Multi-composition is given by composition of $n$-ary functors, as suggested by our choice of terminology.
	\end{ex}
	\begin{df}
		If $P$ is a $\Cat$-operad, we use $P\op$ to denote the $\Cat$-operad where $P\op(n) = P(n)\op$.  The unit and composition remain the same (modulo taking the opposite category).
	\end{df}
	Operads, and $\Cat$-operads, can be used in universal algebra via the notion of an \emph{algebra} for an operad.
	\begin{df}\label{df:pseudomorph_of_cat-operads}
		A \emph{lax-morphism} of $\Cat$-operads $\omega \colon P \to Q$ consists of a functor $\omega_n \colon P(n) \to Q(n)$ for each $n$ such that the unit and multi-composition are preserved up to coherent transformation, by which we mean that:
		\begin{enumerate}
			\item there is a morphism $\lambda \colon 1 \to \omega_1 (1) \in Q(1)$,
			\item and, for each $\theta \in P(n), \theta_1 \in P(k_1), \dots , \theta_n \in P(k_n)$, a morphism
			\[
			\eta_{\theta, \theta_1, \dots , \theta_n} \colon \omega_n(\theta) \circ (\omega_{k_1}(\theta_1), \dots , \omega_{k_n}(\theta_n)) \to \omega_{k_1 + \dots + k_n} (\theta \circ (\theta_1 , \dots , \theta_n)) 
			\]
			in $Q(k_1 + \dots + k_n)$, natural in $\theta, \theta_i$,
		\end{enumerate}
		and moreover satisfying coherence axioms.  While necessary, the coherence axioms are notationally cumbersome and somewhat unenlightening, and so they are postponed to \cref{appendix:coherence}.  We will call such an $\omega \colon P \to Q$ a \emph{pseudo-morphism} if $\lambda$ and each $\eta_{\theta, \theta_1, \dots , \theta_n}$ are isomorphisms, and a \emph{strict morphism} if they are identities.
	\end{df}
	\begin{df}\label{df:pseudo-algebra}
		An \emph{algebra} for a $\Cat$-operad $P$ consists of a category $\Ccat$ and a (strict) morphism of $\Cat$-operads $P \to \End(\Ccat)$.  We define \emph{pseudo-algebras} and \emph{lax-algebras} similarly.
	\end{df}
	In the case of a syntactic operad, this returns the notion of algebra we would have expected.  Let $\Sigma$ be a collection of $n$-ary operations and $E$ a collection of equations over $\Sigma$.  An algebra of the syntactic operad $P_{(\Sigma,E)}$ consists of a category $\Ccat$ with a choice of $n$-ary operation $\Ccat^n \to \Ccat$ for each operation symbol in $\Sigma$ such that the equations in $E$ are satisfied.  A pseudo-algebra is instead the case where these equations hold only up to (coherent) isomorphism.
	\begin{ex}[Monoidal categories]\label{ex:monoidal_cats}
		Monoidal categories can be presented either by giving an $n$-ary tensor $\otimes_n$ for each $n$, or by giving just the binary ($\otimes$) and nullary (unit) operations. Consider the signature $\Sigma_M$ containing an $n$-ary operation $\otimes_n$ for each $n$ and the collection $E_M$ of equations $\otimes_1 = 1$ and $\otimes_n \circ (\otimes_{k_1}, \dots , \otimes_{k_n}) = \otimes_{k_1 + \dots + k_2}$.  An algebra $P_{\Sigma_M, E_M} \to \Ccat$ is a \emph{strict monoidal category}, a pseudo-algebra is a \emph{monoidal category}, and a lax-algebra a \emph{lax monoidal category} \cite[\S 1]{dayross-pseudooperads}. (The usual definition of a (strict/{skew}) monoidal category, such as in \cite{skewmonoidal}, can be recovered by taking $\otimes_0$ as the unit and $\otimes_2$ as the binary product, see \cite[\S 7]{bourkelack}.)  
	\end{ex}

	\section{Day algebras}\label{sec:day_algebras}
	In this section, we present the first of the two main constructions of the paper: a pseudo-morphism of operads $\End(\Ccat) \to \End([\Ccat \to \Set])\op$ from the endomorphism operad on a category $\Ccat$ to the opposite endomorphism operad on the category of \emph{co-presheaves} $[\Ccat \to \Set]$.  Thus, if $\Ccat$ has the structure of an pseudo-algebra for an operad $P$, i.e.\ if there is a pseudo-morphism of $\Cat$-operads $P \to \End(\Ccat)$, the structure can be \emph{extended} to $[\Ccat \to \Set]$ by taking the composite pseudo-morphism $P \to \End(\Ccat) \to \End([\Ccat \to \Set])\op$.
	\subsection{Day monoidal product} 
	Before presenting the main construction of this paper, we recall the construction of the \emph{Day convolution} from \cite{day}.  Given monoidal structure on a category $\Ccat$, there is a canonical extension of the monoidal structure to the category of co-presheaves $[\Ccat \to \Set]$.
	\begin{df}[Day monoidal product]\label{df:day_product}
		Let $(\Ccat,\otimes,I)$ be a monoidal category.  Given two functors $F, G \colon \Ccat \to \Set$, their \emph{Day product} $F \otimes_D G \colon \Ccat \to \Set$ can be defined in three equivalent ways.
		\begin{enumerate}
			\item It is the \emph{left Kan extension} of the functor $F \times G \colon \Ccat \times \Ccat \to \Set$ along $\otimes \colon \Ccat \times \Ccat \to \Ccat$:
			\[\begin{tikzcd}
				{\Ccat \times \Ccat} & \Ccat \\
				{\Set \times \Set} & \Set.
				\arrow["\otimes", from=1-1, to=1-2]
				\arrow["{\langle F, G \rangle}"', from=1-1, to=2-1]
				\arrow["{F \otimes_D G}", dashed, from=1-2, to=2-2]
				\arrow[shorten <=10pt, shorten >=10pt, Rightarrow, from=2-1, to=1-2]
				\arrow["\times", from=2-1, to=2-2]
			\end{tikzcd}\]
			\item The left Kan extension can be computed as the coend
			\[
			(F \otimes_D G)(c) = \int^{(c_0,c_1) \in \Ccat \times \Ccat} F(c_0) \times G(c_1) \times \Ccat(c_0 \otimes c_1,c).
			\]
			\item Finally, and most importantly for us, this is precisely the composite of profunctors
			\[
			\begin{tikzcd}
				\Ccat \ar["\shortmid"{marking}]{r}{\Ccat(\otimes,\id_{\Ccat})} & \Ccat \times \Ccat \ar["\shortmid"{marking}]{r}{F \times G} &  \1.
			\end{tikzcd}
			\]
		\end{enumerate}
		Similarly, we define $I_D$ as the co-representable $\Ccat(I,\id_\Ccat) \colon \Ccat \to \Set$, i.e.\ the profunctor $\Ccat(I,\id_\Ccat) \colon \Ccat \slashedrightarrow \1$.
	\end{df}
	\begin{prop}[Theorem 3.3 \cite{day}]
		If $(\Ccat,\otimes,I)$ is a monoidal category, then so too is $([\Ccat \to \Set],\otimes_D,I_D)$.
	\end{prop}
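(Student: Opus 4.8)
The plan is to transport the whole monoidal structure of $(\Ccat, \otimes, I)$ through the profunctor presentation of \cref{df:day_product}(3). The key observation is that $\otimes_D$ and $I_D$ are the values of a single assignment: a functor $\theta \colon \Ccat^n \to \Ccat$ is sent to the operation $\theta_D(F_1, \dots, F_n) = (F_1 \times \dots \times F_n) \circ \Ccat(\theta, \id_\Ccat)$ on co-presheaves. From this viewpoint the associator, unitors and their coherence for $\otimes_D$ ought to be nothing but the images of the associator, unitors and coherence of $\otimes$ under this assignment. So I would not verify the axioms by hand, but rather organise the argument so that each structural datum on $[\Ccat \to \Set]$ is produced by applying the assignment to the corresponding datum on $\Ccat$.

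Two lemmas carry the construction. \emph{First}, the reverse embedding $\Cat\op \to \Prof$, $\theta \mapsto \Ccat(\theta, \id_\Ccat)$, is a pseudofunctor. \cref{ex:composite_of_cat_and_catop} already supplies the unit constraint $\Ccat(\id_\Ccat, \id) \cong \id_\Ccat$, and the same coend computation gives the composition constraint $\Ccat(\sigma, \id) \circ \Ccat(\theta, \id) \cong \Ccat(\theta \circ \sigma, \id)$ for composable $\theta, \sigma$, with a natural transformation $\theta \Rightarrow \theta'$ sent to a $2$-cell in the reverse direction. \emph{Second}, the external product of profunctors respects composition up to canonical isomorphism, i.e.\ there is an interchange isomorphism $(P' \circ P) \times (Q' \circ Q) \cong (P' \times Q') \circ (P \times Q)$, and $\Ccat(\theta, \id) \times \Ccat(\sigma, \id) \cong \Ccat(\theta \times \sigma, \id)$. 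These two facts are exactly what lets the assignment $\theta \mapsto \theta_D$ behave functorially with respect to multi-composition.

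With these in hand I would unfold $(F \otimes_D G) \otimes_D H$. Writing $F \otimes_D G = (F \times G) \circ \Ccat(\otimes, \id)$ and $H = H \circ \id_\Ccat$, interchange and the associativity constraint of the bicategory $\Prof$ give
\[
(F \otimes_D G) \otimes_D H \cong \big((F \times G) \times H\big) \circ \big(\Ccat(\otimes \times \id_\Ccat, \id) \circ \Ccat(\otimes, \id)\big) \cong \big((F \times G) \times H\big) \circ \Ccat\big(\otimes \circ (\otimes \times \id_\Ccat),\, \id\big),
\]
the last step by the first lemma, and symmetrically $F \otimes_D (G \otimes_D H) \cong (F \times (G \times H)) \circ \Ccat(\otimes \circ (\id_\Ccat \times \otimes), \id)$. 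The associator $a \colon \otimes \circ (\otimes \times \id_\Ccat) \cong \otimes \circ (\id_\Ccat \times \otimes)$ of $\Ccat$, together with the canonical iso $(F \times G) \times H \cong F \times (G \times H)$, then transports under the reverse embedding to the associator $a_D$. The unitors are obtained the same way from $\lambda \colon \otimes \circ (I \times \id_\Ccat) \cong \id_\Ccat$ and $\rho$; for instance $I_D \otimes_D F \cong F \circ \Ccat(\otimes \circ (I \times \id_\Ccat), \id) \cong F \circ \Ccat(\id_\Ccat, \id) \cong F$, using $\lambda$ and the unit clause of the first lemma. (Concretely, each of these isomorphisms can instead be read off the coend in \cref{df:day_product}(2) by repeated use of the density isomorphism $\int^{c_0} \Ccat(x, c_0) \times \Ccat(c_0 \otimes c_1, c) \cong \Ccat(x \otimes c_1, c)$.)

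The main obstacle is coherence: one must check that $a_D, \lambda_D, \rho_D$ satisfy the pentagon and triangle identities. I would deduce these from the statement that the two lemmas assemble the reverse embedding into a genuine pseudofunctor into the (monoidal) bicategory $\Prof$, so that the pentagon and triangle for $a_D, \lambda_D, \rho_D$ are literally the images of the pentagon and triangle for $a, \lambda, \rho$. This is the step that is notationally heavy: it requires verifying that the interchange isomorphism, the associativity and unit constraints of $\Prof$, and the composition isomorphism $\Ccat(\sigma, \id) \circ \Ccat(\theta, \id) \cong \Ccat(\theta \circ \sigma, \id)$ paste together coherently — precisely the bookkeeping that the general construction later isolates and defers to the appendix.
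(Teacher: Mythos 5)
Your construction is correct, and its two pillars are exactly the ones the paper relies on: the pseudofunctoriality of the reverse embedding $\Cat\op \to \Prof$ and the interchange of external products with profunctor composition, with all remaining coherence pushed onto the bicategory $\Prof$. The organizational difference is that the paper never proves this proposition in its biased form at all: it proves the single unbiased statement (\cref{thm:day_ext_is_operadic}) that $(-)_D \colon \End(\Ccat) \to \End([\Ccat \to \Set])\op$ is a pseudo-morphism of $\Cat$-operads, and then obtains the proposition as an instance, since a monoidal category is a pseudo-algebra for the operad $P_{\Sigma_M, E_M}$ of \cref{ex:monoidal_cats} and post-composition with $(-)_D$ yields a pseudo-algebra structure on $[\Ccat \to \Set]$. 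This buys two things your route does not: generality (any algebraic structure, not just monoidal, extends by the same argument), and the disappearance of the pentagon and triangle as separate obligations --- in the unbiased presentation they are absorbed into the operad equations $\otimes_n \circ (\otimes_{k_1}, \dots, \otimes_{k_n}) = \otimes_{k_1 + \dots + k_n}$ holding up to the coherent isomorphisms supplied by the operad morphism, whose coherence axioms are checked once (\cref{appendix:coherence}) against the coherence of $\Prof$. Conversely, your biased argument is more self-contained and elementary for this one statement, but the step you defer --- assembling the composition constraints, interchange, and the constraints of $\Prof$ into a pseudofunctor coherent enough to transport the pentagon and triangle --- is precisely the bulk of the work, and it is no lighter than the appendix verification; you have localized the difficulty rather than removed it. One small point of care if you carry it out: $(-)_D$ is contravariant on 2-cells, so $a_D$, $\lambda_D$, $\rho_D$ are the images of the inverses of $a$, $\lambda$, $\rho$; this is harmless since all are invertible, but the directions must be tracked consistently when checking the transported axioms.
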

	
	\subsection{Day extension}
	We make the rather obvious observation that there is nothing special about the monoidal product used in defining its Day extension in \cref{df:day_product}.  Any algebraic structure on $\Ccat$ can be extended to one on the category of co-presheaves $[\Ccat \to \Set]$, if we allow equations to hold up to isomorphism.  To describe the Day extension of an algebraic structure, we have two tasks:
	\begin{enumerate}
		\item first, we construct the extension of arbitrary $n$-ary operations,
		\item second, we must show that equations between operations are preserved.
	\end{enumerate}
	
	\begin{df}[Day extension]\label{df:day-extension}
		Let $\theta \colon \Ccat^n \to \Ccat$ be a functor.  The \emph{Day extension} of $\theta$ is the functor $\theta_D \colon [\Ccat \to \Set]^n \to [\Ccat \to \Set]$ that acts on objects by sending $(F_1, \dots, F_n)$ to the functor $\Ccat \to \Set$ corresponding to the composite of profunctors
		\[
		\begin{tikzcd}[column sep = large]
			\Ccat \ar["\shortmid"{marking}]{r}{\Ccat(\theta,1_\Ccat)} & \Ccat^n \ar["\shortmid"{marking}]{r}{F_1 \times \dots \times F_n} & \1.
		\end{tikzcd}
		\]
		On arrows, $\theta_D$ sends a tuple of natural transformations $(\alpha_1 \colon F_1 \Rightarrow G_1, \dots , \alpha_n \colon F_n \Rightarrow G_n)$ to the natural transformation corresponding to the 2-cell of profunctors
		\[\begin{tikzcd}[column sep=large]
			\Ccat & {\Ccat^n} && {\1}.
			\arrow["{\Ccat(\theta,1_\Ccat)}", "\shortmid"{marking}, from=1-1, to=1-2]
			\arrow[""{name=0, anchor=center, inner sep=0}, "{F_1 \times \dots \times F_n}", "\shortmid"{marking}, curve={height=-18pt}, from=1-2, to=1-4]
			\arrow[""{name=1, anchor=center, inner sep=0}, "{G_1 \times \dots \times G_n}"', "\shortmid"{marking}, curve={height=18pt}, from=1-2, to=1-4]
			\arrow["{\prod_{i=1}^n \alpha_i}", shift right=3, shorten <=5pt, shorten >=5pt, Rightarrow, from=0, to=1]
		\end{tikzcd}\]
		\end{df}
	
	\begin{rem}
		Note that the Day extension $\theta_D(F_1, \dots , F_n)$ described above is equivalent to:
		\begin{enumerate}
			\item the coend
			\[
			\theta_D(F_1, \dots , F_n)(c) = \int^{(c_1, \dots , c_n) \in \Ccat^n} F_1(c_1) \times \dots \times F_n(c_n) \times \Ccat(\theta(c_1, \dots , c_n),c) ,
			\]
			\item and the left Kan extension
			\[\begin{tikzcd}
				\Ccat^n & \Ccat \\
				\Set^n & \Set,
				\arrow["\theta", from=1-1, to=1-2]
				\arrow["{\langle F_1 , \dots , F_n \rangle}"', from=1-1, to=2-1]
				\arrow["{\theta_D(F_1, \dots, F_n)}", dashed, from=1-2, to=2-2]
				\arrow[shorten <=10pt, shorten >=10pt, Rightarrow, from=2-1, to=1-2]
				\arrow["\prod", from=2-1, to=2-2]
			\end{tikzcd}
			\]
		\end{enumerate}
		mirroring \cref{df:day_product}.
	\end{rem}
	
	Our Day extension is naturally contravariant for $n$-ary operations.
		Let $\alpha \colon \theta \Rightarrow \chi$ be a natural transformation between two functors $\theta, \chi \colon \Ccat^n \to \Ccat$.  We define $\alpha_D \colon \chi_D \Rightarrow \theta_D$ as the natural transformation whose component at $(F_1, \dots , F_n) \in [\Ccat \to \Set]^n$ is given by the map of co-presheaves corresponding to the transformation of profunctors
		\[\begin{tikzcd}[column sep=large]
			{\Ccat} && \Ccat^n & { \1}.
			\arrow[""{name=0, anchor=center, inner sep=0}, "{\Ccat(\chi,1_\Ccat)}", "\shortmid"{marking}, curve={height=-18pt}, from=1-1, to=1-3]
			\arrow[""{name=1, anchor=center, inner sep=0}, "{\Ccat(\theta,1_\Ccat)}"', "\shortmid"{marking}, curve={height=18pt}, from=1-1, to=1-3]
			\arrow["{F_1 \times \dots \times F_n}", "\shortmid"{marking}, from=1-3, to=1-4]
			\arrow["{\Ccat(\alpha,1_\Ccat)}", shift right=3, shorten <=5pt, shorten >=5pt, Rightarrow, from=0, to=1]
		\end{tikzcd}\]
		Naturality follows from the interchange law for horizontal composition of 2-cells.
	
	\begin{lem}\label{lem:Day_ext_func_fixed_n}
		For a fixed $n$, the Day extension 
		\[(-)_D \colon [\Ccat^n \to \Ccat] \to [[\Ccat \to \Set]^n \to [\Ccat \to \Set]]\op\]
		defines a functor.
	\end{lem}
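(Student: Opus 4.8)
The plan is to realise $(-)_D$, for fixed $n$, as a composite of manifestly functorial constructions living in the bicategory $\Prof$, and then to transpose across the cartesian-closed structure of $\Cat$. Writing $\vec F = (F_1, \dots, F_n)$, recall from \cref{df:day-extension} that $\theta_D(\vec F)$ is by definition the horizontal composite $(F_1 \times \dots \times F_n) \circ \Ccat(\theta, 1_\Ccat)$ in $\Prof$, where $\Ccat(\theta,1_\Ccat) \colon \Ccat \slashedrightarrow \Ccat^n$ and $F_1 \times \dots \times F_n \colon \Ccat^n \slashedrightarrow \1$.

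First I would record the functoriality of the two inputs to this composite. On the one hand, the representable assignment $\theta \mapsto \Ccat(\theta, 1_\Ccat)$, $\alpha \mapsto \Ccat(\alpha, 1_\Ccat)$ is the action on this hom-category of the bifunctor $\Cat\op \to \Prof$ noted after the definition of profunctors; a direct Yoneda-style check shows it is a functor
\[
R \colon [\Ccat^n \to \Ccat]\op \to \Prof(\Ccat, \Ccat^n),
\]
contravariant on $2$-cells because the first ($\op$) variable of $\Ccat(-,-)$ reverses $\alpha$. Here lies the origin of the $(-)\op$ in the target of the lemma. On the other hand, the external product $\vec F \mapsto F_1 \times \dots \times F_n$ is visibly a functor $[\Ccat \to \Set]^n \to \Prof(\Ccat^n, \1)$, sending a tuple of natural transformations to its pointwise product.

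Next I would invoke the single bicategorical axiom that horizontal composition is a functor of hom-categories,
\[
\circ \colon \Prof(\Ccat^n, \1) \times \Prof(\Ccat, \Ccat^n) \to \Prof(\Ccat, \1) = [\Ccat \to \Set].
\]
Pairing $R$ with the external product and post-composing with $\circ$ yields a single functor
\[
[\Ccat^n \to \Ccat]\op \times [\Ccat \to \Set]^n \longrightarrow [\Ccat \to \Set], \qquad (\theta, \vec F) \longmapsto \theta_D(\vec F).
\]
Transposing this across the cartesian closed structure of $\Cat$ (currying the second variable) and taking opposites gives precisely a functor $[\Ccat^n \to \Ccat] \to [[\Ccat \to \Set]^n \to [\Ccat \to \Set]]\op$. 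Unwinding the transposition shows it sends $\theta$ to the functor $\theta_D$ and $\alpha$ to the natural transformation $\alpha_D$ whose component at $\vec F$ is the whiskering $(F_1 \times \dots \times F_n) \circ \Ccat(\alpha, 1_\Ccat)$, its naturality in $\vec F$ being the interchange law already quoted. In particular, identities and vertical composites are preserved: $\Ccat(1_\theta, 1_\Ccat)$ is an identity $2$-cell and $\Ccat(\beta \cdot \alpha, 1_\Ccat) = \Ccat(\alpha, 1_\Ccat) \cdot \Ccat(\beta, 1_\Ccat)$, and whiskering respects both, giving $(1_\theta)_D = 1_{\theta_D}$ and $(\beta \cdot \alpha)_D = \alpha_D \cdot \beta_D$, which is exactly composition in the opposite category.

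I expect the only genuine content to be the functoriality of the representable embedding $R$ on $2$-cells, i.e.\ that $\Ccat(-, 1_\Ccat)$ is contravariant and strictly respects vertical composition and identities; everything downstream is then formal, since functoriality of $(-)_D$ reduces to functoriality of horizontal composition, which holds in \emph{any} bicategory. No associativity or unit coherence of $\Prof$ is needed at this stage---that is deferred to \cref{thm:day_ext_is_operadic}. The main hazard is therefore purely one of bookkeeping: keeping the variances straight, since the contravariance of $R$ is precisely what forces $(-)_D$ to land in the opposite functor category, and one should check that this reversal does not clash with the covariant functoriality of the external product and of composition.
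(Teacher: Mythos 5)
Your proof is correct and takes essentially the same route as the paper: the paper's entire argument is the observation that the bifunctor $\Cat\op \to \Prof$ sending $\theta$ to $\Ccat(\theta,1_\Ccat)$ is functorial on 2-cells, which is exactly the fact you isolate as the only genuine content (your functor $R$). The remaining steps you spell out---functoriality of horizontal composition on hom-categories, currying in $\Cat$, and the interchange law for naturality of $\alpha_D$---are the formal packaging the paper leaves implicit, so no further comparison is needed.
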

	\begin{proof}
		This follows from the fact that the bifunctor $\Cat\op \to \Prof$ that sends $F \colon \Ccat \to \Dcat$ to $\Dcat(F,1_\Dcat) \colon \Dcat \slashedrightarrow \Ccat$ is functorial in 2-cells.
	\end{proof}

	\subsection{Day extension as a morphism of operads}
	Next, we demonstrate that equations between operations are preserved by Day extension (up to canonical isomorphism). The language of {operads} gives a neat way of formulating this.

	\begin{thm}\label{thm:day_ext_is_operadic}
		Day extension defines a pseudo-morphism of $\Cat$-operads 
		\[(-)_D \colon \End(\Ccat) \to \End([\Ccat \to \Set])\op,\]
		where each $n$-ary functor $\theta \colon \Ccat^n \to \Ccat$ is sent to $\theta_D \colon [\Ccat \to \Set]^n \to [\Ccat \to \Set]$.
	\end{thm}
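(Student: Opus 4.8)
The plan is to assemble the pseudo-morphism from the arity-wise functors already supplied by \cref{lem:Day_ext_func_fixed_n}, and then to produce the unit isomorphism $\lambda$ and the multi-composition isomorphisms $\eta$ from the bicategorical structure of $\Prof$. Since the codomain is the \emph{opposite} operad, the contravariance of Day extension in 2-cells (recorded just before \cref{lem:Day_ext_func_fixed_n}) is exactly what matches the $\op$; and because $\lambda$ and $\eta$ are required to be isomorphisms, their direction is immaterial for existence.

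For the unit, I would observe that $(\id_\Ccat)_D(F)$ is by definition the profunctor composite $\Ccat \xslashedrightarrow{\Ccat(\id_\Ccat,1_\Ccat)} \Ccat \xslashedrightarrow{F} \1$. But $\Ccat(\id_\Ccat,1_\Ccat)$ is the hom-profunctor $\Ccat(-,-)$, i.e.\ the identity profunctor on $\Ccat$, so the unitor of the bicategory $\Prof$ supplies a canonical isomorphism $(\id_\Ccat)_D(F) \cong F$, natural in $F$. This yields the required invertible $\lambda$.

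The heart of the argument is the multi-composition constraint. Fix $\theta \in [\Ccat^n \to \Ccat]$ and $\theta_i \in [\Ccat^{k_i} \to \Ccat]$, and write $K = k_1 + \dots + k_n$. I would unfold $(\theta \circ (\theta_1, \dots , \theta_n))_D(\vec{F})$ as the composite along $\Ccat(\theta(\theta_1, \dots , \theta_n),1_\Ccat)$ followed by $F_1^1 \times \dots \times F_n^{k_n}$. Using functoriality of the contravariant embedding $\Cat\op \to \Prof$ (the fact invoked in the proof of \cref{lem:Day_ext_func_fixed_n}), and that $\theta \circ (\theta_1, \dots , \theta_n) = \theta \circ (\theta_1 \times \dots \times \theta_n)$ as functors, the leading profunctor factors up to isomorphism as
\[\Ccat \xslashedrightarrow{\Ccat(\theta,1_\Ccat)} \Ccat^n \xslashedrightarrow{\Ccat^n(\theta_1 \times \dots \times \theta_n,\, 1_{\Ccat^n})} \Ccat^K.\]
I would then use two elementary facts about $\Prof$: first, that $\Ccat^n(\theta_1 \times \dots \times \theta_n, 1_{\Ccat^n})$ decomposes as the external product $\Ccat(\theta_1,1_\Ccat) \times \dots \times \Ccat(\theta_n,1_\Ccat)$; and second, that composition commutes with external products, $(Q_1 \times \dots \times Q_n) \circ (P_1 \times \dots \times P_n) \cong (Q_1 \circ P_1) \times \dots \times (Q_n \circ P_n)$, which holds because finite products in $\Set$ preserve colimits in each variable and so pass through the defining coends (Fubini). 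Combining these with the associator of $\Prof$ identifies the $\Ccat^n \slashedrightarrow \1$ tail with $(\theta_1)_D(\vec{F}_1) \times \dots \times (\theta_n)_D(\vec{F}_n)$, whose composite with $\Ccat(\theta,1_\Ccat)$ is exactly $\theta_D((\theta_1)_D(\vec{F}_1), \dots , (\theta_n)_D(\vec{F}_n))$. Naturality of the resulting $\eta$ in $\theta$ and the $\theta_i$ follows from 2-functoriality of the embedding, the external product, and composition.

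The main obstacle I anticipate is not the construction of $\lambda$ and $\eta$ but the verification of the coherence axioms of \cref{df:pseudomorph_of_cat-operads}: one must check that the associator and unitor of $\Prof$, the embedding-functoriality isomorphisms, and the product-composition isomorphisms interact so as to satisfy the pentagon- and triangle-type conditions relating iterated multi-composites. This bookkeeping, while routine in principle, is notationally heavy, so I would relegate it to \cref{appendix:coherence}, where it reduces to the coherence of $\Prof$ together with the compatibility of the external product with composition.
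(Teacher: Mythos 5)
Your proposal is correct and follows essentially the same route as the paper: both reduce the unit constraint to the unitor of $\Prof$ applied to the identity profunctor $\Ccat(1_\Ccat,1_\Ccat)$, and reduce the multi-composition constraint to re-bracketing the same composable triple of profunctors, using the bifunctoriality isomorphism of the embedding $\Cat\op \to \Prof$, the identification of $\Ccat^n((\theta_1,\dots,\theta_n),1_{\Ccat^n})$ with the product $\Ccat(\theta_1,1_\Ccat) \times \dots \times \Ccat(\theta_n,1_\Ccat)$, and the associator of $\Prof$, with coherence deferred to the appendix. The only difference is cosmetic: you make explicit the Fubini-type fact that profunctor composition commutes with external products, which the paper invokes implicitly when identifying the right-bracketed composite with $\theta_D \circ ({\theta_1}_D, \dots , {\theta_n}_D)$.
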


    Many readers will be accustomed to viewing Day extension in the context of presheaves, as in the following corollary: 

    \begin{coro}\label{coro:day_ext_is_operadic_presheaves}
        Day extension defines a pseudo-morphism of $\Cat$-operads 
		\[(-)_D \colon \End(\Dcat) \to \End([\Dcat\op \to \Set]),\]
		where each $n$-ary functor $\theta \colon \Dcat^n \to \Dcat$ is sent to $\theta_D \colon [\Dcat \to \Set]^n \to [\Dcat \to \Set]$.
    \end{coro}

    \begin{proof}[Proof of Corollary \ref{coro:day_ext_is_operadic_presheaves}]
     If $\Ccat = \Dcat\op$, then $\End(\Ccat) = \End(\Dcat)\op$.
    \end{proof}

	\begin{proof}[Proof of Theorem \ref{thm:day_ext_is_operadic}]
		By \cref{lem:Day_ext_func_fixed_n}, $(-)_D$ defines a functor
		\[[\Ccat^n \to \Ccat] \to [[\Ccat \to \Set]^n \to [\Ccat \to \Set]]\op\]
		for each $n$.  It remains to show that the unit and multi-composition are preserved up to coherent isomorphism.  To show that the unit is preserved, note that $\Ccat(1_\Ccat, 1_\Ccat) \colon \Ccat \slashedrightarrow \Ccat$ is the identity profunctor on $\Ccat$. By unwrapping definitions, $({\id_{\Ccat}})_D $ acts on $[\Ccat \to \Set]$ as $- \circ \Ccat(1_\Ccat,1_\Ccat)$.  Thus, the \emph{right unitor} $\mathfrak{r}_{\Ccat, \1}$ of the bicategory $\Prof$ (see \cite[Definition 1.1]{benaboubicats}) yields the desired coherent isomorphism $- \circ \Ccat(1_\Ccat,1_\Ccat)  = ({\id_{\Ccat}})_D \to \id_{[\Ccat \to \Set]} \in \End([\Ccat \to \Set])$ (and hence an isomorphism $\id_{[\Ccat \to \Set]} \to ({\id_{\Ccat}})_D$ in $\End([\Ccat \to \Set])\op$).  
		
		Now we turn to multi-composition.  Given $\theta \colon \Ccat^n \to \Ccat$ and $\theta_i \colon \Ccat^{k_i} \to \Ccat$, we must show that there is a canonical isomorphism $(\theta \circ (\theta_1, \dots, \theta_n))_D \cong \theta_D \circ ({\theta_1}_D, \dots {\theta_n}_D)$.  Given co-presheaves $F_1^1, \dots , F_1^{k_1}, F_2^1,  \dots , F_n^{k_n}$, consider the composable triple of profunctors
        \begin{align}
			& \begin{tikzcd}[column sep = large, ampersand replacement = \&]
				\Ccat \ar["\shortmid"{marking}]{r}{\Ccat(\theta,1_\Ccat)} \& \Ccat^n \ar["\shortmid"{marking}]{rr}{\Ccat^n((\theta_1,\dots,\theta_n),1_{\Ccat^n})} \&\& \Ccat^{k_1 + \dots + k_n} \ar["\shortmid"{marking}]{r}{F_1^1 \times \dots \times F_n^{k_n}} \& \1
			\end{tikzcd} \nonumber \\
        = & \begin{tikzcd}[column sep = large, ampersand replacement = \&]
			\Ccat \ar["\shortmid"{marking}]{r}{\Ccat(\theta,1_\Ccat)} \& \Ccat^n \ar["\shortmid"{marking}]{rr}{\Ccat(\theta_1,1_\Ccat) \times \dots \times \Ccat(\theta_n, 1_\Ccat)} \&\& \Ccat^{k_1 + \dots + k_n} \ar["\shortmid"{marking}]{r}{F_1^1 \times \dots \times F_n^{k_n}} \& \1,
            \label{eq:theta_circ_theta}
		\end{tikzcd}
        \end{align}
		where the equality follows since hom-sets in a product of categories are products of hom-sets, i.e.\ $\Ccat^n((\theta_1,\dots,\theta_n),1_{\Ccat^n}) = \Ccat(\theta_1,1_\Ccat) \times \dots \times \Ccat(\theta_n, 1_\Ccat)$.  There are two ways to compose this triple -- either left pair first, or right pair first.  The former yields a canonical isomorphism
        \begin{align*}
            & (F_1^1  \times \dots \times F_n^{k_n}) \circ( \Ccat^n((\theta_1, \dots , \theta_n),1_{\Ccat^n}) \circ \Ccat(\theta,1_\Ccat) ) \\
               \cong &  (F_1^1  \times \dots \times F_n^{k_n}) \circ \Ccat(\theta \circ (\theta_1, \dots , \theta_n),1_{\Ccat}) \\
            = & (\theta \circ (\theta_1, \dots, \theta_n))_D(F_1^1, \dots , F_n^{k_n})
        \end{align*}
        given by $(F_1^1 \times \dots \times F_n^{k_n}) \ast \mathfrak{c}_{\theta, (\theta_1, \dots , \theta_n)}$, where \[\mathfrak{c}_{\theta, (\theta_1, \dots , \theta_n)} \colon \Ccat^n((\theta_1, \dots , \theta_n),1_{\Ccat^n}) \circ \Ccat(\theta,1_\Ccat) \xRightarrow{\sim} \Ccat(\theta \circ (\theta_1, \dots , \theta_n), 1_\Ccat)\]
        is the coherent choice of isomorphism given by the bifunctoriality of $\Cat\op \to \Prof$. Next, recall that ${\theta_i}_D(F_i^1, \dots , F_i^{k_i})$ is the co-presheaf corresponding to the profunctor $(F_i^1 \times \dots \times F_i^{k_i}) \circ \Ccat(\theta_i,1_\Ccat)$, whence we deduce that, by composing the right pair first in \cref{eq:theta_circ_theta}, we obtain the composable pair of profunctors
		\[
		\begin{tikzcd}[column sep = large]
			\Ccat \ar["\shortmid"{marking}]{r}{\Ccat(\theta,1_\Ccat)} & \Ccat^n \ar["\shortmid"{marking}]{rrrr}{{\theta_1}_D(F_1^1, \dots, F_1^{k_1}) \times \dots \times {\theta_n}_D(F_n^1, \dots , F_n^{k_n})} &&&& \1 ,
		\end{tikzcd}
		\]
        i.e.\ the co-presheaf $ \theta_D \circ ({\theta_1}_D(F_1^1, \dots, F_1^{k_1}) , \dots , {\theta_n}_D(F_n^1, \dots , F_n^{k_n}))$.  We conclude that there is a coherent isomorphism 
         \begin{align*}
            & \theta_D \circ ({\theta_1}_D(F_1^1, \dots, F_1^{k_1}) , \dots , {\theta_n}_D(F_n^1, \dots , F_n^{k_n})) \\
            = &  ((F_1^1  \times \dots \times F_n^{k_n}) \circ \Ccat^n((\theta_1, \dots , \theta_n),1_{\Ccat^n}) )\circ \Ccat(\theta,1_\Ccat) \\
            \cong & (F_1^1  \times \dots \times F_n^{k_n}) \circ( \Ccat^n((\theta_1, \dots , \theta_n),1_{\Ccat^n}) \circ \Ccat(\theta,1_\Ccat) )
        \end{align*}
		given by the \emph{associator} $\mathfrak{a}_{\Ccat,\Ccat^n,\1} $ of the bicategory $\Prof$.  Hence, the composite isomorphism $((F_1^1 \times \dots \times F_n^{k_n}) \ast \mathfrak{c}_{\theta, (\theta_1, \dots , \theta_n)} )
        \circ \mathfrak{a}_{\Ccat,\Ccat^n,\1} $ yields the coherent isomorphism
        \[(\theta \circ (\theta_1, \dots, \theta_n))_D(F_1^1, \dots , F_n^{k_n}) \xrightarrow{\sim} \theta_D \circ ({\theta_1}_D(F_1^1, \dots, F_1^{k_1}) , \dots , {\theta_n}_D(F_n^1, \dots , F_n^{k_n})).\]
        It is easily shown that this isomorphism is natural in both $F_i^j$ and $\theta, \theta_i$.  By taking the inverse of the above natural isomorphism, we obtain the isomorphism in the desired direction $\theta_D \circ ({\theta_1}_D , \dots , {\theta_n}_D) \to (\theta \circ (\theta_1, \dots , \theta_n))_D \in \End([\Ccat \to \Set])\op$.  
       
        Finally, the coherence axioms expressed in \cref{appendix:coherence} follow as a direct consequence of the coherence equations satisfied by the bicategory $\Prof$ and the bifunctoriality of $\Cat\op \to \Prof$ (see \cref{prop:coherence_of_ext}).
	\end{proof} 
	
	In particular, $(-)_D \colon \End(\Ccat) \to \End([\Ccat \to \Set])\op$ sends invertible 2-cells to invertible 2-cells. It follows that Day extension send algebras in $\Ccat$ to pseudo-algebras in $[\Ccat\to\Set]$ (and similarly $[\Ccat\op\to\Set]$). 

    \begin{coro}\label{coro:Day-algebras}
       Let $\Sigma$ be an algebraic signature (i.e.\ a collection of operations with finite arity) and $E$ a collection of equations $E$ over $\Sigma$. Given a pseudo-algebra $\Ccat$ of $(\Sigma,E)$, then the Day extension of each operation $\theta \colon \Ccat^n \to \Ccat$ in $\Sigma$ defines the structure of a pseudo-algebra for $(\Sigma,E)$ on $[\Ccat \to \Set]$. 
    \end{coro}
   Indeed, the algebra structure is just the composite pseudo-morphism $P_{\Sigma,E} \to \End(\Ccat) \to \End([\Ccat \to \Set])\op$.

    \begin{rem}
        After writing this paper, Nathanael Arkor pointed out to the authors that the statement of \cref{coro:Day-algebras} appears in \cite[Proposition 1.4.28]{espalungue}.
    \end{rem}
	
	\begin{ex}[\S 7 \cite{dayross-pseudooperads}]
		In a similar fashion, postcomposing with $(-)_D$ sends lax algebra structure to \emph{op-lax} algebra structure.  In particular, given a lax monoidal category, i.e.\ a lax-morphism of operads $P_{\Sigma_M, E_M} \to \End(\Ccat)$ (cf.\ \cref{ex:monoidal_cats}), its Day convolution yields an \emph{op-lax} morphism of operads $P_{\Sigma_M, E_M} \to \End([\Ccat\to\Set])$, and thus an op-lax monoidal structure on $[\Ccat \to \Set]$.
	\end{ex}

	\section{Partial operations}\label{sec:partial}
	There are important instances where the algebra operations are partial rather than total.  For example, in \emph{separation logic}, the key monoidal operation represents the union of \emph{disjoint} subheaps \cite{separation}.  In this section, we show that the Day extension considered in \cref{sec:day_algebras} can be modified to incorporate partial operations as well.	
	\subsection{The partial endomorphism operad}
	First, we will describe an operadic approach to algebraic structures with partial operations.  When working with partial operations, we must make a choice about what it means to compose partial functors, i.e.\ spans of functors $\Dcat \hookleftarrow \Ecat \to \Ccat$, which we write as $\Dcat \rightharpoondown \Ccat$.  We adopt the convention that the composite of a pair of partial functors exists if there is a factorisation
	\begin{equation}\label{eq:composition_of_partial_functors}
	\begin{tikzcd}
		& \Ecat \ar{r}{F} \ar[hook]{d} & \Ccat \\
		\Ecat' \ar{r}[']{G} \ar[hook]{d} \ar[dashed]{ru}{G'} & \Dcat & \\
		\Dcat',
	\end{tikzcd}
	\end{equation}
	i.e.\ $G$ only takes values in the domain of $F$, in which case the composite is given by the outside span $\Dcat \hookleftarrow \Ecat' \to \Ccat$ (for further discussion on this choice, see \cref{remarks_on_spans}).  As such, we must relax the notion of a $\Cat$-operad to allow for situations where the multi-composite may not exist.
	\begin{df}(Partial $\Cat$-operads and their morphisms)
		A \emph{partial $\Cat$-operad} consists of:
		\begin{enumerate}
			\item a category $P(n)$, for each $n \in \N$,
			\item a \emph{unit} $1 \in P(1)$,
			\item for each $n,k_1,\dots,k_n \in \N$, a \emph{partial} functor
			\[
			P(n) \times P(k_1) \times \dots \times P(k_n) \rightharpoondown P(k_1 + \dots + k_n)
			\]
		\end{enumerate}
		satisfying the associativity axioms
		\[
		\theta \circ (\theta_1 \circ (\theta_1^1, \dots , \theta_1^{k_1}), \dots , \theta_n \circ (\theta_n^1 , \dots , \theta_n^{k_n})) = (\theta \circ (\theta_1, \dots , \theta_n)) \circ (\theta_1^1, \dots , \theta_n^{k_n}),
		\]
		\[
		\alpha \circ (\alpha_1 \circ (\alpha_1^1, \dots , \alpha_1^{k_1}), \dots , \alpha_n \circ (\alpha_n^1 , \dots , \alpha_n^{k_n})) = (\alpha \circ (\alpha_1, \dots , \alpha_n)) \circ (\alpha_1^1, \dots , \alpha_n^{k_n}),
		\]
		for all objects $\theta, \theta_i , \theta_i^j$ and arrows $\alpha, \alpha_i , \alpha_i^j$ such that these multi-composites exist, and the partial unital axiom: not only is $ 1 \circ \theta$ is defined for all $\theta$, but moreover it is given by $\theta$.
		
		We define morphisms of partial $\Cat$-operads in a similar way to morphisms of $\Cat$-operads (\cref{df:pseudomorph_of_cat-operads}), except that we only preserve those multi-composites that are defined.  That is to say, a \emph{lax morphism of partial $\Cat$-operads} $\omega \colon P \to Q$ consists of the data of a functor $\omega_n \colon P(n) \to Q(n)$ for each $n$, a morphism $\lambda \colon 1 \to \omega_1(1) \in Q(1)$, and for each $\theta, \theta_i$ such that the multi-composite $\theta \circ (\theta_1 , \dots , \theta_n)$ exists, then the multi-composite $\omega_n(\theta) \circ (\omega_{k_1}(\theta_1), \dots , \omega_{k_n}(\theta_n))$ is also defined and there is a morphism
		\[
		\eta_{\theta, \theta_1, \dots , \theta_n} \colon \omega_n(\theta) \circ (\omega_{k_1}(\theta_1), \dots , \omega_{k_n}(\theta_n)) \to \omega_{k_1 + \dots + k_n} (\theta \circ (\theta_1 , \dots , \theta_n)) 
		\]
		in $Q(k_1 + \dots + k_n)$.  This data must satisfy the naturality and coherence conditions expressed in \cref{df:pseudomorph_of_cat-operads} and \cref{appendix:coherence}.  Once again, we make the distinction between lax, pseudo and strict morphisms as in \cref{df:pseudomorph_of_cat-operads}.
	\end{df}
    Initially, we restrict to partial operations whose domain is a full subcategory.
	\begin{df}(Partial operation)
		Let $\Ccat$ be a category.  An $n$-ary \emph{partial operation} on $\Ccat$, written as $\theta \colon \Ccat^n \rightharpoondown \Ccat$, is a functor $\theta \colon \Dcat \to \Ccat$ where $\Dcat$ is a \emph{full} subcategory of $\Ccat^n$.
	\end{df}
	\begin{df}[Partial endomorphism operad]\label{df:partial_endo_operad}
		Let $\Ccat$ be a category.  We denote by $\End(\Ccat)_\partial$ the partial $\Cat$-operad of \emph{partial operations}.  For each $n \in \N$, $\End(\Ccat)_\partial(n)$ is the category whose objects are $n$-ary partial operations $\theta \colon \Ccat^n \rightharpoondown \Ccat$.  An arrow $\alpha \colon \theta \to \chi$ in $\End(\Ccat)_\partial$ is a natural transformation $\alpha \colon \theta \Rightarrow \chi$ between partial operations with the same domain.
		
		The unit of the operad is the identity functor $\id_\Ccat$.  For $n, k_1, \dots k_n \in \N$, the multi-composite
		\[
		\circ \colon \End(\Ccat)_\partial(n) \times \End(\Ccat)_\partial(k_1) \times \dots \times \End(\Ccat)_\partial(k_n) \rightharpoondown \End(\Ccat)_\partial(k_1 + \dots + k_n)
		\]
		is given as follows.  The domain is the full subcategory spanned by those partial operations $\theta \colon \Ccat^n \rightharpoondown \Ccat, \theta_i \colon \Ccat^{k_i} \rightharpoondown \Ccat$ for which $(\theta_1, \dots , \theta_n) \colon \Ccat^{k_1 + \dots k_n} \rightharpoondown \Ccat^n$ factors through the domain $\Dcat \subseteq \Ccat^n$ of $\theta$, i.e.\ those tuples where the composite as partial functors exists.  The composite as partial functors describes the action on objects of the multi-composition $\circ$. Let $\theta , \chi \colon \Ccat \rightharpoondown \Ccat$, $\theta_i, \chi_i \colon \Ccat^{k_i} \rightharpoondown \Ccat$ be partial operations for which the multi-composites $\theta \circ (\theta_1 , \dots , \theta_n), \chi \circ (\chi_1, \dots , \chi_n)$ exist, and let $\alpha \colon \theta \to \chi , \beta_i \colon \theta_i \to \chi_i $ be arrows in $\End(\Ccat)_\partial(n)$ (respectively, $\End(\Ccat)_\partial(k_i)$).  That is, the pairs $\theta,\chi$ and $\theta_i,\chi_i$ all share the same domain. Note that since $(\theta_1, \dots, \theta_n), (\chi_1, \dots , \chi_n) \colon \Dcat_1 \times \dots \times \Dcat_n \to \Ccat$ both factor through the full subcategory $\Dcat \subseteq \Ccat$ (the shared domain of $\theta$ and $\chi$), the natural transformation $(\beta_1, \dots , \beta_n) \colon (\theta_1, \dots , \theta_n) \Rightarrow (\chi_1, \dots , \chi_n)$ lifts to a natural transformation between the factoring functors $(\theta_1, \dots , \theta_n)' , (\chi_1, \dots , \chi_n)' \colon \Dcat_1 \times \dots \times \Dcat_n \to \Dcat$.  We define $\alpha \circ (\beta_1 ,\dots , \beta_n)$ as the horizontal composite of 2-cells
        \[\begin{tikzcd}
	{\Ccat^{k_1 + \dots + k_n}} & {\Dcat_1 \times \dots \times \Dcat_n} &[20pt]& \Dcat && \Ccat.
	\arrow[hook', from=1-2, to=1-1]
	\arrow[""{name=0, anchor=center, inner sep=0}, "{(\theta_1, \dots , \theta_n)'}", curve={height=-25pt}, from=1-2, to=1-4]
	\arrow[""{name=1, anchor=center, inner sep=0}, "{(\chi_1, \dots , \chi_n)'}"', curve={height=25pt}, from=1-2, to=1-4]
	\arrow[""{name=2, anchor=center, inner sep=0}, "\theta", curve={height=-18pt}, from=1-4, to=1-6]
	\arrow[""{name=3, anchor=center, inner sep=0}, "\chi"', curve={height=18pt}, from=1-4, to=1-6]
	\arrow["{(\beta_1,\dots,\beta_n)}", shorten <=5pt, shorten >=5pt, Rightarrow, from=0, to=1]
	\arrow["\alpha", shorten <=5pt, shorten >=5pt, Rightarrow, from=2, to=3]
\end{tikzcd}\]
	\end{df}
	\begin{ex}
		Each total operation $\theta \colon \Ccat^n \to \Ccat$ can be considered as a partial operation $\theta \colon \Ccat^n \rightharpoondown \Ccat$ where the inclusion of the domain is just the identity $\id_{\Ccat^n} \colon \Ccat^n \to \Ccat^n$.  Indeed, this identifies $\End(\Ccat)(n)$ as a full subcategory of $\End(\Ccat)_\partial(n)$, for each $n \in \N$.  These inclusion functors together define a (strict) morphism of partial $\Cat$-operads $\End(\Ccat) \hookrightarrow \End(\Ccat)_\partial$, and in this sense $\End(\Ccat)$ is a \emph{suboperad} of $\End(\Ccat)_\partial$.
	\end{ex}
	\subsection{Day extension of partial operations}
	\begin{df}[Day extension of partial operations]\label{df:partial-Day-extension}
		The \emph{Day extension} of a partial operation $\theta \colon \Ccat^n \rightharpoondown \Ccat$ is the \emph{total} functor $\theta_D \colon [\Ccat \to \Set]^n \to [\Ccat \to \Set]$ given by sending $F_1, \dots , F_n \colon \Ccat \to \Set$ to the co-presheaf corresponding to the composite of profunctors
		\[
		\begin{tikzcd}[column sep = large]
			\Ccat \ar["\shortmid"{marking}]{r}{\Ccat(\theta,1_\Ccat)} & \Dcat \ar["\shortmid"{marking}]{r}{\Ccat^n(1_{\Ccat^n},i)}  & \Ccat^n \ar["\shortmid"{marking}]{r}{F_1 \times \dots \times F_n} & \1,
		\end{tikzcd}
		\]
		and a tuple of natural transformations $\alpha_i \colon F_i \Rightarrow G_i$ between co-presheaves to the natural transformation corresponding to the transformation of profunctors
		\[\begin{tikzcd}[column sep=large]
			\Ccat & \Dcat \ar["\shortmid"{marking}]{r}{\Ccat^n(1_{\Ccat^n},i)} & {\Ccat^n} && {\1},
			\arrow["{\Ccat(\theta,1_\Ccat)}", "\shortmid"{marking}, from=1-1, to=1-2]
			\arrow[""{name=0, anchor=center, inner sep=0}, "{F_1 \times \dots \times F_n}", "\shortmid"{marking}, curve={height=-18pt}, from=1-3, to=1-5]
			\arrow[""{name=1, anchor=center, inner sep=0}, "{G_1 \times \dots \times G_n}"', "\shortmid"{marking}, curve={height=18pt}, from=1-3, to=1-5]
			\arrow["{\prod_{i=1}^n \alpha_i}", shift right=3, shorten <=5pt, shorten >=5pt, Rightarrow, from=0, to=1]
		\end{tikzcd}\]
		where $i$ is the inclusion $\Dcat \subseteq \Ccat^n$ of the domain of $\theta$.
		
		Given a natural transformation $\alpha \colon \theta \Rightarrow \chi$ between partial operations $\theta, \chi \colon \Ccat^n \rightharpoondown \Ccat$ with the same domain $\Dcat \subseteq \Ccat^n$, i.e.\ an arrow $\theta \to \chi$ in $\End(\Ccat)_\partial(n)$, we define $\alpha_D \colon \chi_D \to \theta_D$ as the natural transformation whose component at $(F_1, \dots , F_n)$ is given by the transformation of profunctors
		\[\begin{tikzcd}[column sep=large]
			\Ccat && \Dcat & {\Ccat^n} & \1.
			\arrow[""{name=0, anchor=center, inner sep=0}, "{\Ccat(\chi,\id_\Ccat)}", "\shortmid"{marking}, curve={height=-18pt}, from=1-1, to=1-3]
			\arrow[""{name=1, anchor=center, inner sep=0}, "{\Ccat(\theta,\id_\Ccat)}"', "\shortmid"{marking}, curve={height=18pt}, from=1-1, to=1-3]
			\arrow["{\Ccat^n(\id_{\Ccat^n},i)}", "\shortmid"{marking}, from=1-3, to=1-4]
			\arrow["{F_1 \times \dots \times F_n}", "\shortmid"{marking}, from=1-4, to=1-5]
			\arrow["{\Ccat(\alpha, \id_\Ccat)}", shift right=3, shorten <=5pt, shorten >=5pt, Rightarrow, from=0, to=1]
		\end{tikzcd}\]
        The order of composition in this definition is not important as both choices are coherently isomorphic.
	\end{df}
	\begin{rem}\label{rem:partial_Day_extension_of_total_operation}
		Note that, by taking $i \colon \Dcat \hookrightarrow \Ccat^n$ as $\id_{\Ccat^n}$ in the above definition, we recover the description of the Day extension for total operations from \cref{df:day-extension}.
	\end{rem}
	\begin{rem}\label{rem:partial_day_as_coend}
		Let $\theta \colon \Ccat^n \rightharpoondown \Ccat$ be a partial operation and $F_1, \dots , F_n$ co-presheaves.  As a coend $\theta_D(F_1, \dots , F_n)$ is given by the formula
		\begin{align*}
		\theta_D(F_1, \dots , F_n)(a) & = \int^{\vec{d} \in \Dcat} \int^{\vec{c} \in \Ccat} \Ccat(\theta(\vec{d}),a) \times \Ccat^n(\vec{c},\vec{d}) \times \prod_{c_i \in \vec{c}} F_i(c_i), \\
		& \simeq \int^{\vec{c} \in \Ccat} \int^{\vec{d} \in \Dcat} \Ccat(\theta(\vec{d}),a) \times \Ccat^n(\vec{c},\vec{d}) \times \prod_{c_i \in \vec{c}} F_i(c_i).
		\end{align*}
	\end{rem}
	\begin{lem}\label{lem:Day_partial_func}
		For each $n$, the Day extension of partial operations yields a functor $(-)_D \colon \End(\Ccat)_\partial(n) \to \End([\Ccat \to \Set])(n)$.
	\end{lem}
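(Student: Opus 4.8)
The plan is to reduce the claim to the proof of \cref{lem:Day_ext_func_fixed_n} by first exploiting a structural decomposition of the source category. The key observation is that, by definition, an arrow of $\End(\Ccat)_\partial(n)$ is a natural transformation between partial operations \emph{sharing the same domain}. Consequently there are no arrows between operations with distinct domains, and the category splits as a disjoint union
\[
\End(\Ccat)_\partial(n) \;\cong\; \coprod_{\Dcat} [\Dcat \to \Ccat],
\]
indexed by the full subcategories $\Dcat \subseteq \Ccat^n$, where the summand indexed by $\Dcat$ is the functor category of those partial operations whose domain is $\Dcat$. Since a functor out of a coproduct is determined by its restrictions to the summands, I would fix a full subcategory $\Dcat \subseteq \Ccat^n$, with inclusion $i \colon \Dcat \hookrightarrow \Ccat^n$, and verify functoriality on $[\Dcat \to \Ccat]$.

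On this summand the inclusion profunctor $\Ccat^n(1_{\Ccat^n}, i) \colon \Dcat \slashedrightarrow \Ccat^n$ is \emph{constant}, as every operation in the summand shares the domain $\Dcat$. Reading off \cref{df:partial-Day-extension}, the assignment $\theta \mapsto \theta_D(F_1, \dots, F_n)$ therefore factors as a composite of two manifestly functorial operations: first the map
\[
[\Dcat \to \Ccat] \longrightarrow \Prof(\Ccat, \Dcat), \qquad \theta \longmapsto \Ccat(\theta, 1_\Ccat),
\]
which is the restriction of the bifunctor $\Cat\op \to \Prof$ recalled in the preliminaries, and which is functorial on 2-cells (sending $\alpha \colon \theta \Rightarrow \chi$ to $\Ccat(\alpha, 1_\Ccat) \colon \Ccat(\chi, 1_\Ccat) \Rightarrow \Ccat(\theta, 1_\Ccat)$); and second, post-composition by the \emph{fixed} profunctor $(F_1 \times \dots \times F_n) \circ \Ccat^n(1_{\Ccat^n}, i) \colon \Dcat \slashedrightarrow \1$, which is a functor $\Prof(\Ccat, \Dcat) \to \Prof(\Ccat, \1) = [\Ccat \to \Set]$ because whiskering preserves identities and vertical composites of 2-cells. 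The composite of these two functors is exactly $(-)_D$ on the summand, so identities are sent to identities and vertical composites are preserved; just as in \cref{lem:Day_ext_func_fixed_n}, the construction is contravariant on 2-cells.

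I do not expect a genuine obstacle: once the coproduct decomposition is in hand, the argument is simply the whiskered version of \cref{lem:Day_ext_func_fixed_n} with the domain inclusion held fixed. The one point that demands care is the decomposition itself — it is precisely the shared-domain requirement on arrows of $\End(\Ccat)_\partial(n)$ that renders the middle inclusion profunctor $\Ccat^n(1_{\Ccat^n}, i)$ inert along every arrow, thereby confining the whole dependence on $\theta$ to the single hom-profunctor $\Ccat(\theta, 1_\Ccat)$, where the established functoriality of $\Cat\op \to \Prof$ in 2-cells applies directly.
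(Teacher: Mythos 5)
Your proposal is correct and takes essentially the same approach as the paper: the paper's (one-line) proof likewise reduces the claim to the functoriality on 2-cells of the bifunctor $\Cat\op \to \Prof$, just as in \cref{lem:Day_ext_func_fixed_n}. Your coproduct decomposition $\End(\Ccat)_\partial(n) \cong \coprod_{\Dcat} [\Dcat \to \Ccat]$ and the whiskering argument simply make explicit the shared-domain convention on arrows that the paper leaves implicit.
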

	\begin{proof}
		This is a consequence of the functoriality of $\Cat\op \to \Prof$ on 2-cells, just as in \cref{lem:Day_ext_func_fixed_n}.
	\end{proof}
	\begin{lem}\label{lem:partial-multi-composite}
		Let $\theta \colon \Ccat^n \rightharpoondown \Ccat, \theta_i \colon \Ccat^{k_i} \rightharpoondown \Ccat$ be partial operations on a category $\Ccat$ (recall that, in particular, the domain of $\theta$ is a full subcategory of $\Dcat$).  Suppose that $(\theta_1 , \dots , \theta_n ) \colon \Ccat^{k_1 + \dots + k_n} \rightharpoondown \Ccat$ factors through $\Dcat \subseteq \Ccat$ (i.e.\ the multi-composite $\theta \circ (\theta_1, \dots , \theta_n)$ exists).  Then there is a canonical isomorphism $(\theta \circ (\theta_1, \dots , \theta_n))_D \cong \theta_D \circ ({\theta_1}_D , \dots , {\theta_n}_D)$.
	\end{lem}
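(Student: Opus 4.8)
The plan is to follow the template of the proof of \cref{thm:day_ext_is_operadic}: unwind both sides into composable chains in $\Prof$ and match them using the associators of $\Prof$ together with the coherence isomorphisms $\mathfrak{c}$ coming from bifunctoriality of $\Cat \to \Prof$ and $\Cat\op \to \Prof$. The genuinely new ingredient is the fullness of the domain inclusions. To set up, write $i\colon\Dcat\hookrightarrow\Ccat^n$ for the inclusion of the domain of $\theta$, $\iota_\ell\colon\Dcat_\ell\hookrightarrow\Ccat^{k_\ell}$ for the domains of the $\theta_\ell$, and $(\theta_1,\dots,\theta_n)'\colon\Dcat_1\times\dots\times\Dcat_n\to\Dcat$ for the factorisation witnessing that the multi-composite exists. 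By \cref{df:partial_endo_operad} the multi-composite $\theta\circ(\theta_1,\dots,\theta_n)$ has domain $\Dcat_1\times\dots\times\Dcat_n$, with inclusion $j=\iota_1\times\dots\times\iota_n$, and underlying functor $\theta\circ(\theta_1,\dots,\theta_n)'$; so by \cref{df:partial-Day-extension} the left-hand side, evaluated at copresheaves $F_1^1,\dots,F_n^{k_n}$, is the composite
\[(F_1^1\times\dots\times F_n^{k_n})\circ\Ccat^{k_1+\dots+k_n}(1,j)\circ\Ccat(\theta\circ(\theta_1,\dots,\theta_n)',1).\]
Unwinding $\theta_D\circ({\theta_1}_D,\dots,{\theta_n}_D)$ through \cref{df:partial-Day-extension}, and expanding the product of the copresheaves ${\theta_\ell}_D(F_\ell^1,\dots,F_\ell^{k_\ell})$ by the interchange isomorphism $(\prod_\ell P_\ell)\circ(\prod_\ell Q_\ell)\cong\prod_\ell(P_\ell\circ Q_\ell)$ for the Cartesian product of profunctors, the right-hand side becomes
\[(F_1^1\times\dots\times F_n^{k_n})\circ\big(\textstyle\prod_\ell\Ccat^{k_\ell}(1,\iota_\ell)\big)\circ\big(\textstyle\prod_\ell\Ccat(\theta_\ell,1)\big)\circ\Ccat^n(1,i)\circ\Ccat(\theta,1).\]

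Both chains carry the same outer factors $(F_1^1\times\dots\times F_n^{k_n})$ and $\Ccat(\theta,1)$, so it suffices to exhibit a canonical isomorphism between the two intermediate profunctors $\Dcat\slashedrightarrow\Ccat^{k_1+\dots+k_n}$. On the left I would split $\Ccat(\theta\circ(\theta_1,\dots,\theta_n)',1)\cong\Dcat((\theta_1,\dots,\theta_n)',1)\circ\Ccat(\theta,1)$ via the coherence isomorphism $\mathfrak{c}$, reducing the left intermediate to $\Ccat^{k_1+\dots+k_n}(1,j)\circ\Dcat((\theta_1,\dots,\theta_n)',1)$. On the right I would use, exactly as in \cref{eq:theta_circ_theta}, that hom-sets of a product of categories are products of hom-sets to rewrite $\prod_\ell\Ccat(\theta_\ell,1)=\Ccat^n(\theta_1\times\dots\times\theta_n,1)$ and $\prod_\ell\Ccat^{k_\ell}(1,\iota_\ell)=\Ccat^{k_1+\dots+k_n}(1,j)$, the latter because $j=\iota_1\times\dots\times\iota_n$.

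The crux is then to identify $\Ccat^n(\theta_1\times\dots\times\theta_n,1)\circ\Ccat^n(1,i)$ with $\Dcat((\theta_1,\dots,\theta_n)',1)$. By \cref{ex:composite_of_cat_and_catop}, applied to $i\colon\Dcat\to\Ccat^n$ and $\theta_1\times\dots\times\theta_n\colon\Dcat_1\times\dots\times\Dcat_n\to\Ccat^n$, this composite is isomorphic to $\Ccat^n(\theta_1\times\dots\times\theta_n,i)$, which sends a pair $(\vec p,d)$ to $\Ccat^n((\theta_1\times\dots\times\theta_n)(\vec p),i(d))$. Since $\theta_1\times\dots\times\theta_n=i\circ(\theta_1,\dots,\theta_n)'$ and $i$ is the inclusion of a \emph{full} subcategory, this hom-set is precisely $\Dcat((\theta_1,\dots,\theta_n)'(\vec p),d)$, so the composite is $\Dcat((\theta_1,\dots,\theta_n)',1)$, as required. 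This is the one step where the hypothesis that partial operations have \emph{full} domains is indispensable: without fullness the representable of $i$ would fail to cancel against the corepresentable. Reassembling the two intermediates through the associator $\mathfrak{a}$ of $\Prof$ then gives the desired isomorphism, and naturality in the $F_\ell^j$ together with the operadic coherence axioms follow, as in \cref{thm:day_ext_is_operadic}, from the coherence of $\Prof$ and the bifunctoriality of $\Cat\to\Prof$ and $\Cat\op\to\Prof$.
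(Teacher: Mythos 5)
Your proof is correct and takes essentially the same route as the paper's: both unwind the two sides into composites of profunctors, split $\Ccat(\theta \circ (\theta_1, \dots , \theta_n)',1)$ via the bifunctoriality of $\Cat\op \to \Prof$, and invoke fullness of the domain inclusion $i$ for the key cancellation — your identification of $\Ccat^n(\theta_1 \times \dots \times \theta_n,1) \circ \Ccat^n(1,i)$ with $\Dcat((\theta_1, \dots , \theta_n)',1)$ via \cref{ex:composite_of_cat_and_catop} is exactly the paper's identity $\Ccat^n(i,1) \circ \Ccat^n(1,i) = \Ccat^n(i,i) = \Dcat(1,1)$ in different packaging. The only substantive difference is that you make explicit the interchange isomorphism combining the $n$ parallel chains ${\theta_\ell}_D(F_\ell^1, \dots , F_\ell^{k_\ell})$ into a single composite, a step the paper leaves implicit.
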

	\begin{proof}
		Let $m = k_1 + \dots + k_n$.  By assumption, we have the following commuting diagram of functors as displayed on the left, from which we obtain the diagram of profunctors on the right.
		\[
		\begin{tikzcd}
			{\Ccat^n} & \Dcat & \Ccat \\
			{\Dcat_1 \times \dots \times \Dcat_n} \\
			{\Ccat^{m}},
			\arrow["i"', from=1-2, to=1-1]
			\arrow["\theta", from=1-2, to=1-3]
			\arrow["{(\theta_1, \dots , \theta_n)}", from=2-1, to=1-1]
			\arrow["{(\theta_1, \dots , \theta_n)'}"', dashed, from=2-1, to=1-2]
			\arrow["{(i_1, \dots , i_n)}"', hook, from=2-1, to=3-1]
		\end{tikzcd}
		\begin{tikzcd}
			{\Ccat^n} & \Dcat & \Ccat \\
			{\Dcat_1 \times \dots \times \Dcat_n} \\
			{\Ccat^{m}}.
			\arrow["{\Ccat^n((\theta_1 , \dots, \theta_n),1_{\Ccat^n}) \, }"', "\shortmid"{marking}, from=1-1, to=2-1]
			\arrow["{\Ccat^n(1_{\Ccat^n},i)}"', "\shortmid"{marking}, from=1-2, to=1-1]
			\arrow["{\Dcat((\theta_1, \dots , \theta_n)',1_\Dcat)}", "\shortmid"{marking}, from=1-2, to=2-1]
			\arrow["{\Ccat(\theta,1_\Ccat)}"', "\shortmid"{marking}, from=1-3, to=1-2]
			\arrow["{\Ccat^{m}(1_{\Ccat^m},(i_1, \dots , i_n)) \, } "', "\shortmid"{marking}, from=2-1, to=3-1]
		\end{tikzcd}
		\]
		We observe that the diagram of profunctors on the right also commutes (up to coherent isomorphism).  This follows from the bifunctoriality of the embedding of $\Cat\op$ in $\Prof$, and since 
        $\Ccat^n(i,1_{\Ccat^n})\circ\Ccat^n(1_{\Ccat^n},i) = \Ccat^n (i,i) = \Dcat(1_\Dcat,1_\Dcat)$, with the last equality holding because  $\Dcat$ is a full subcategory of $\Ccat^n$.
		
		Given co-presheaves $F_1^1, \dots , F_n^{k_n} \colon \Ccat \slashedrightarrow \1$, for each $j$, ${\theta_j}_D(F_j^1, \dots , F_j^{k_j})$ is given (up to coherent isomorphism) by the composite profunctor $(F_j^1 \times \dots \times F_j^{k_j}) \circ \Ccat^n(1_{\Ccat^n},i_j) \circ \Ccat(\theta_j,1_\Ccat) \colon \Ccat \slashedrightarrow \1$, and hence $\theta_D \circ ({\theta_1}_D, \dots , {\theta_n}_D)(F_1^1 , \dots , F_n^{k_n})$ is (isomorphic to) the composite profunctor
		\[
		(F_1^1 \times \dots \times F_n^{k_n}) \circ \Ccat^{m}(1_{\Ccat^m}, (i_1, \dots , i_n)) \circ \Ccat^n((\theta_1, \dots , \theta_n), 1_{\Ccat^n}) \circ \Ccat^n(1_{\Ccat^n},i) \circ \Ccat(\theta,1_\Ccat) \colon \Ccat \slashedrightarrow \1.
		\]
        Note that this composite is itself defined only up to coherent isomorphism since we have left the order of the composition unspecified. 
        
		Meanwhile, the partial composite $\theta \circ (\theta_1, \dots , \theta_n)$ is the span
		\[
		\begin{tikzcd}[column sep= large]
			\Ccat^{m} & \ar[hook']{l}[']{(i_1, \dots , i_n)} \Dcat_1 \times \dots \times \Dcat_n \ar{r}{\theta \circ (\theta_1, \dots , \theta_n)'} & \Ccat
		\end{tikzcd}
		\]
		and so $(\theta \circ (\theta_1, \dots , \theta_n))_D(F_1^1, \dots , F_n^{k_n})$ is given by the composite profunctor
		\[
		(F_1^1 \times \dots \times F_n^{k_n}) \circ \Ccat^{m}(1_{\Ccat^m}, (i_1, \dots , i_n)) \circ \Ccat(\theta \circ (\theta_1, \dots , \theta_n)', 1_{\Ccat}) \colon \Ccat^{m} \slashedrightarrow \1.
		\]
		By the bifunctoriality of $\Cat\op \to \Prof$ and the isomorphism $\Dcat((\theta_1 , \dots , \theta)',1_\Dcat) \cong \Ccat^n((\theta_1, \dots , \theta_n), 1_{\Ccat^n}) \circ \Ccat^n(1_{\Ccat^n},i)$ established above, there are canonical isomorphisms
		\begin{align*}
			& (F_1^1 \times \dots \times F_n^{k_n}) \circ \Ccat^{m}(1_{\Ccat^m}, (i_1, \dots , i_n)) \circ \Ccat(\theta \circ (\theta_1, \dots , \theta_n)', 1_\Ccat) \\
			\cong &  (F_1^1 \times \dots \times F_n^{k_n}) \circ \Ccat^{m}(1_{\Ccat^m}, (i_1, \dots , i_n)) \circ \Dcat((\theta_1 , \dots , \theta)',1_\Dcat) \circ \Ccat(\theta,1_\Ccat), \\
			 \cong & (F_1^1 \times \dots \times F_n^{k_n}) \circ \Ccat^{m}(1_{\Ccat^m}, (i_1, \dots , i_n)) \circ \Ccat^n((\theta_1, \dots , \theta_n), 1_{\Ccat^n}) \circ \Ccat^n(1_{\Ccat^n},i) \circ \Ccat(\theta,1_{\Ccat}).
		\end{align*}
		The latter is isomorphic to $\theta_D \circ ({\theta_1}_D, \dots , {\theta_n}_D)(F_1^1 , \dots , F_n^{k_n})$.  This yields the desired natural isomorphism $(\theta \circ (\theta_1, \dots , \theta_n))_D \cong \theta_D \circ ({\theta_1}_D , \dots , {\theta_n}_D)$.
	\end{proof}
	\begin{thm}\label{thm:day-ext-partial}
		Day extension of partial operations defines a pseudo-morphism of partial $\Cat$-operads
		\[
		(-)_D \colon \End(\Ccat)_\partial \to \End([\Ccat \to \Set])\op.
		\]
		Moreover, the triangle of pseudo-morphisms
		\begin{equation}\label{eq:partial-extensions-extends-extension}
		\begin{tikzcd}
			\End(\Ccat) \ar[hook]{d} \ar{rd}{(-)_D} & \\
			\End(\Ccat)_\partial \ar{r}{(-)_D} & \End([\Ccat \to \Set])\op 
		\end{tikzcd}
		\end{equation}
		commutes.
	\end{thm}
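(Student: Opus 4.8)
The plan is to assemble the pseudo-morphism from the three ingredients already in hand, mirroring the proof of \cref{thm:day_ext_is_operadic} almost verbatim, and then to read off commutativity of \cref{eq:partial-extensions-extends-extension} from \cref{rem:partial_Day_extension_of_total_operation}. First, \cref{lem:Day_partial_func} supplies the component functors $(-)_D \colon \End(\Ccat)_\partial(n) \to \End([\Ccat \to \Set])(n)\op$ for each $n$, recording the contravariance of Day extension on $2$-cells. The key structural observation is that the codomain $\End([\Ccat \to \Set])\op$ is a genuine $\Cat$-operad: every multi-composite of total operations on $[\Ccat \to \Set]$ exists. Consequently the existence clause in the definition of a pseudo-morphism of pre-operads is automatic --- whenever the source multi-composite $\theta \circ (\theta_1, \dots, \theta_n)$ is defined, its image multi-composite is defined for free --- and all that remains is to exhibit the structure isomorphisms $\lambda$ and $\eta$ and to check their coherence.

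For the unit I would argue exactly as in \cref{thm:day_ext_is_operadic}: the unit $\id_\Ccat$ of $\End(\Ccat)_\partial$ is total with domain $\Ccat$, so by \cref{rem:partial_Day_extension_of_total_operation} its Day extension is $- \circ \Ccat(1_\Ccat, 1_\Ccat)$, and the right unitor $\mathfrak{r}_{\Ccat,\1}$ of $\Prof$ yields the coherent isomorphism $\lambda$ to $\id_{[\Ccat \to \Set]}$. For multi-composition, \cref{lem:partial-multi-composite} already furnishes, for every tuple $(\theta, \theta_1, \dots, \theta_n)$ whose partial multi-composite exists, the coherent isomorphism $\eta_{\theta, \theta_1, \dots, \theta_n} \colon \theta_D \circ ({\theta_1}_D, \dots, {\theta_n}_D) \xRightarrow{\sim} (\theta \circ (\theta_1, \dots, \theta_n))_D$, together with its naturality in the operations (here naturality is required only within the full subcategory on which the composite is defined). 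Since these isomorphisms are built from the associator $\mathfrak{a}$ of $\Prof$, the structure isomorphisms of the embedding $\Cat\op \to \Prof$, and the cancellation $\Ccat^n(i,1)\circ\Ccat^n(1,i) = \Dcat(1,1)$ afforded by fullness of $\Dcat \subseteq \Ccat^n$, the coherence axioms of \cref{appendix:coherence} follow from the coherence of the bicategory $\Prof$ and the bifunctoriality of $\Cat\op \to \Prof$, precisely as in the total case --- now restricted to those tuples where the composites are defined.

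It then remains to verify that the triangle \cref{eq:partial-extensions-extends-extension} commutes. The inclusion $\End(\Ccat) \hookrightarrow \End(\Ccat)_\partial$ is the strict morphism identifying a total operation $\theta \colon \Ccat^n \to \Ccat$ with the partial operation whose domain inclusion $i$ is $\id_{\Ccat^n}$. By \cref{rem:partial_Day_extension_of_total_operation}, taking $i = \id_{\Ccat^n}$ makes the partial Day extension of such a $\theta$ coincide on the nose with its total Day extension from \cref{df:day-extension}, and likewise on arrows, so the two legs of the triangle agree on objects and morphisms. Moreover, when every domain inclusion is an identity, each profunctor $\Ccat^n(1,i)$ appearing in \cref{lem:partial-multi-composite} is the identity profunctor, whence $\eta_{\theta, \theta_1, \dots, \theta_n}$ degenerates to the isomorphism produced in \cref{thm:day_ext_is_operadic}; thus the unit and multi-composition isomorphisms of the composite pseudo-morphism match those of the direct Day extension, and the triangle commutes as pseudo-morphisms.

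The hard part is not any single new computation but the bookkeeping of the domain inclusions: one must confirm that the coherence cells of \cref{appendix:coherence} remain coherent once the profunctors $\Ccat^n(1,i)$ are threaded through the associativity pentagons and unit triangles. This is exactly where fullness of $\Dcat \subseteq \Ccat^n$ carries the weight, since it is fullness that collapses $\Ccat^n(i,1)\circ\Ccat^n(1,i)$ to the identity and thereby reduces each partial coherence diagram to an instance of a total one already validated in \cref{thm:day_ext_is_operadic}.
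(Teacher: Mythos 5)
Your proposal is correct and takes essentially the same route as the paper's own proof: it assembles the pseudo-morphism from \cref{lem:Day_partial_func} (functoriality in each arity), the identity-profunctor/right-unitor argument for the unit, and \cref{lem:partial-multi-composite} for the multi-composites that exist, then derives the commutativity of \cref{eq:partial-extensions-extends-extension} from \cref{rem:partial_Day_extension_of_total_operation}. Your additional observations --- that existence of image multi-composites is automatic because the codomain is a total $\Cat$-operad, and that the structure cells degenerate to those of \cref{thm:day_ext_is_operadic} when the domain inclusions are identities --- are elaborations the paper leaves implicit rather than a different argument.
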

	\begin{proof}
		By \cref{lem:Day_partial_func}, for each $n$, $(-)_D \colon \End(\Ccat)_\partial(n) \to \End([\Ccat \to \Set])\op(n)$ is a functor.  Just as in \cref{thm:day_ext_is_operadic}, the unit is preserved up to (canonical) isomorphism since $\Ccat(1_\Ccat,1_\Ccat) \colon \Ccat \slashedrightarrow \Ccat$ is the identity profunctor on $\Ccat$, while the preservation up to (canonical) isomorphism for those multi-composites that exist is given in \cref{lem:partial-multi-composite}.  Finally, the commutativity of \cref{eq:partial-extensions-extends-extension} follows from \cref{rem:partial_Day_extension_of_total_operation}.
	\end{proof}

    \begin{coro}\label{coro:Day-algebras-partial}
       As in Corollary \ref{coro:Day-algebras}, let $\Sigma$ be an algebraic signature (i.e.\ a collection of operations with finite arity) and $E$ a collection of equations $E$ over $\Sigma$. Given a pseudo-partial-algebra $\Ccat$ of $(\Sigma,E)$, where the equations in $E$ hold up to strong equality, then the Day extension of each operation $\theta \colon \Ccat^n \to \Ccat$ in $\Sigma$ defines the structure of a (total) pseudo-algebra for $(\Sigma,E)$ on $[\Ccat \to \Set]$. 
    \end{coro}
    
	Thus, Day extension also permits the extension of partial algebraic structure, and in doing so it takes a partial operator into a total one. 
    An example of this, that we discuss further in \cref{ex:sep_logic}, is the separating conjunction ($\ast$) of separation logic, which is the Day extension of the partial binary operation $\sqcup \colon \H^2 \to \H$ that takes the union of \emph{disjoint} heaps.  The resultant operation on propositions $\ast$ is total despite the partiality of $\sqcup$. 
    
    When discussing partial operations, there are two notions of equality that are commonly considered.
	\begin{df}\label{df:stongequality} Let $\theta , \psi \colon \Ccat^n \rightharpoondown \Ccat$ be partial operations,
		\begin{enumerate}
			\item then $\theta$ and $\psi$ are said to be \emph{weakly equal} if they agree on the intersection of their domains,
			\item and $\theta$ and $\psi$ are said to be \emph{strongly equal} if they are weakly equal and have the same domain, i.e.\ they are identical functors.
		\end{enumerate}
	\end{df}
	\begin{df}
		By a \emph{partial algebra} for a operad $P$, we mean a category $\Ccat$ and a strict morphism of partial $\Cat$-operads $P \to \End(\Ccat)_\partial$.  Similarly, we will talk of \emph{partial pseudo-algebras} and \emph{partial lax-algebras}.
	\end{df}
	In particular, if $\Sigma$ is a collection of operation symbols with arities, and $E$ is a collection of equations over $\Sigma$, then a partial algebra $P_{(\Sigma,E)} \to \End(\Ccat)_\partial$ consists of a category $\Ccat$ and partial operations $\theta \colon \Ccat^n \rightharpoondown \Ccat$, for each operation symbol in $\Sigma$, for which the equations in $E$ are interpreted as strong equalities.  We note that the Day extension of partial operations preserves strong equality, up to isomorphism.  Thus, by post-composing a strict/pseudo/lax partial algebra structure $P \to \End(\Ccat)_\partial$ with the Day extension $(-)_D 
	\colon \End(\Ccat)_\partial \to \End([\Ccat \to \Set])\op$, we obtain a pseudo/pseudo/oplax partial algebra structure on $[\Ccat \to \Set]$.  
	\begin{rem}
		Weak equality between partial operations cannot be preserved by Day extension in general since the result is always a \emph{total} operation.  For example, the empty operation $0 \colon \emptyset \to \Ccat$ is weakly equal to any other $n$-ary operation on $\Ccat$, but $0_D$ is total and so cannot be weakly equal (up to isomorphism) to every $n$-ary operation on $[\Ccat \to \Set]$.
	\end{rem}
	\begin{ex}[Hybrid logic]\label{ex:hybrid_logic}
    This example and the next concern {Kripke semantics} for modal logic.  Recall that a \emph{Kripke frame} consists of a partially ordered set $\W$ that is normally interpreted as the `possible worlds'.  A propositional formula $\phi$ can either be true or false at a world $w \in \W$ (we write $w \Vdash \phi$ for the former) with the added monotonicity condition that if $w \leqslant v$ and $w \Vdash \phi$, then $v \Vdash \phi$.  Thus, propositional formulae can be equated with the objects of $[\W \to \2]$, i.e.\ the \emph{subterminal} objects of $[\W \to \Set]$.  The poset $[\W \to \2]$ inherits the structure of a Heyting algebra from the cartesian closed structure on $[\W \to \Set] = \Prof(\W,\1)$.  
		
		Hybrid logic extends modal logic via the addition of \emph{nominals} that allow reference to specific elements of the Kripke frame $\W$ (for further details, see \cite[\S 1, \S 8]{hybrid}).  Let $a$ be an element of $\W$ and $\phi$ a proposition.  The \emph{nominal} $\a$ is the proposition in $[\W \to \2]$ where $w \Vdash \a$ if and only if $ a \leqslant w$.  We also add the unary operation $@_a$, where $w \Vdash @_a \phi$ if and only if $a \Vdash \phi$, i.e.\ the proposition $@_a \phi$ expresses ``$\phi$ is true in world $a$''.
		
		We recognise that $\a \colon \1 \to [\W \to \2]$ and $@_a \colon [\W \to \2] \to [\W \to \2]$ are examples of Day extensions.
		\begin{enumerate}
			\item Let $\iota_a \colon \1 \to \W$ be the inclusion of the element $a \in \W$.    The profunctor $\W(\iota_a,1) \colon \W \slashedrightarrow \1$, as a function $\W \to \2 \subseteq \Set$, sends $w \in \W$ to $\top \in \2$ if and only if $a \leqslant w$.  Thus, $\W(\iota_a,1)$, which is the Day extension $({\iota_a})_D$, yields the interpretation of the nominal $\a$.
			\item Assuming that $\W$ is \emph{rooted} (\cite[\S 2.3]{modallogic}), i.e.\ $\W$ has a bottom element $w_0 \in \W$, consider the partial function $\iota_{w_0/a} \colon \W \rightharpoondown \W$ whose domain is $\iota_{a} \colon \1 \to \W$ and whose action is $\iota_{w_0} \colon \1 \to \W$.  It is readily calculated that the composite profunctor
			\[\begin{tikzcd}
				\W & \1 & \W & \1
				\arrow["{\W(\iota_{w_0},1)}", "\shortmid"{marking}, from=1-1, to=1-2]
				\arrow["{\W(1,\iota_{a})}", "\shortmid"{marking}, from=1-2, to=1-3]
				\arrow["\phi", "\shortmid"{marking}, from=1-3, to=1-4]
			\end{tikzcd}\]
			is precisely $@_a \phi$, and thus the Day extension $(\iota_{w_0/a})_D$ acts on propositions as $@_a$.
		\end{enumerate}
	\end{ex}
    \begin{ex}[Linear temporal logic]
        Linear temporal logic (LTL) has long been employed in program verification \cite{pnueli}.  In LTL, our set of worlds $W$, interpreted as time stamps, is discretely ordered and equipped with a \emph{successor} function $s \colon W \to W$ that represents a discrete time step.  We recognise that the Day extension $s_D \colon [W \to \2] \to [W \to \2]$ acts on a proposition $\phi$ as the `next' operator, usually written as ${\bf X} \phi$ \cite[\S 3.2]{huth_ryan}; that is, $w \Vdash {\bf X} \phi$ if and only if $s w \Vdash \phi$, or in other words ${\bf X} \phi$ expresses ``$\phi$ is satisfied in the next time step''.

        Using \cref{thm:day_ext_is_operadic} and \cref{thm:day-ext-partial}, we can generate the logical congruences obtained when mixing the operators of hybrid logic and LTL.  As a simple example, given $a \in W$, we have that $s_D \circ (\iota_a)_D \cong (s \circ \iota_a)_D = (\iota_{s a})_D $, expressing the tautology that ${\bf X}\a$ is logically equivalent to the nominal on $s a$.
    \end{ex}
	\begin{ex}[Separation logic]\label{ex:sep_logic}
		Separation logic was introduced for reasoning about programs which alter data structures.  The key idea is adding a ``separating conjunction'' $\phi \ast \psi$ between propositions that asserts that $\phi$ and $\psi$ hold on separate parts of the data structure \cite{separation,reynolds}.  The standard semantics for separation logic consists of a set $\H$ called \emph{heaps}, equipped with a partial binary operation $\sqcup \colon \H^2 \rightharpoondown \H$, whose domain $\Dcat$ we call the \emph{disjoint} heaps; $\sqcup$ is intended to be the union of disjoint heaps.  We write $h \# h'$ if the pair $h,h'$ are in the domain $\Dcat \subseteq \H^2$ of $\sqcup$.
		
		Given propositions $\phi, \psi \in [\H \to \2]$ on heaps, their \emph{separating conjunction} $\phi \ast \psi \in [\H \to \2]$ is the proposition where $h \Vdash \phi \ast \psi$ if and only if there exist $h_1$ and $h_2$ such that $h = h_1 \sqcup h_2$, $h_1 \Vdash \phi$ and $h_2 \Vdash \psi$, i.e.\ the heap $h$ can be decomposed into disjoint parts $h_1$ and $h_2$ on which $\phi$ and $\psi$ hold respectively.  We readily calculate that $\phi \ast \psi$, as a profunctor, is given by the composite
		\[\begin{tikzcd}
			\H & \Dcat & {{\H}^2} & \1.
			\arrow["{\H(\sqcup,1_\H)}", "\shortmid"{marking}, from=1-1, to=1-2]
			\arrow["{\H^2(i,1_{\H^2})}", "\shortmid"{marking}, from=1-2, to=1-3]
			\arrow["{\phi \times \psi}", "\shortmid"{marking}, from=1-3, to=1-4]
		\end{tikzcd}\]
		Thus, the separating conjunction $\ast$ of propositions is precisely given by the Day extension $\sqcup_D \colon [\H \to \2]^2 \to [\H \to \2]$.
		
		If we instead chose $\H$ to be a partially ordered set, and took $\sqcup \colon \H^2 \rightharpoondown \H$ as a monotone partial binary function, by the same construction we would recover the intuitionistic variant of separation logic (see, for instance, \cite[\S 9]{ishtiaq-ohearn}).
	\end{ex}
	\subsection{Adjoints to Day extensions}
		Recall that, given a monoidal category $(\Ccat, \otimes , I)$, the Day monoidal product $\otimes_D$ on $[\Ccat \to \Set]$ is \emph{residuated}, i.e.\ there are binary operations, usually denoted by $- / -, -\backslash - \colon [\Ccat \to \Set]^2 \to [\Ccat \to \Set]$, such that there are adjunctions $F \otimes_D - \dashv F \backslash -$ and $- \otimes_D F \dashv - / F$ for all $F \in [\Ccat \to \Set]$ (in \cite{day} and \cite[\S VII.7]{CWM}, this is called a \emph{(bi-)closed} monoidal structure, whereas we are following the terminology from classical order theory, for instance \cite{residuated}). Thus, Day convolution can be used to generate a class of models for \emph{bunched implication logic} \cite[\S 3.1]{bunched˙implications˙99} (by also using the cartesian closed monoidal structure on $[\Ccat \to \Set]$). 
		
		We now observe that, just as there was nothing special about the monoidal product when defining its Day extension, the existence of adjoints to Day convolution also extends to all algebraic and partial algebraic structure.
	\begin{prop}\label{prop:day_ext_closed}
		Let $\theta \colon \Ccat^n \rightharpoondown \Ccat$ be a partial operation.  The Day extension $\theta_D $ is \emph{residuated} in each variable.  That is, for each $j$ there is an operation 
        \[R^j_\theta \colon [\Ccat \to \Set]^n \to [\Ccat \to \Set]\]
        such that, for all tuples of co-presheaves $F_1, \dots , F_n , G \colon \Ccat \to \Set$, there is a natural isomorphism
		\[
		[\Ccat \to \Set]\left(\theta_D(F_1, \dots , F_n),G\right)  \cong [\Ccat \to \Set]\left(F_j,R^j_\theta(G, F_1, \dots , F_{j-1}, F_{j+1} , \dots , F_n)\right).
		\]
	\end{prop}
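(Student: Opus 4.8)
The plan is to leverage the two defining features already in play: by \cref{df:partial-Day-extension} each value $\theta_D(F_1, \dots , F_n)$ is a composite of profunctors, and the bicategory $\Prof$ is \emph{closed}, meaning that precomposition by a fixed profunctor admits a right adjoint (a right extension), which exists because $\Set$ is complete. The idea is to isolate the dependence of $\theta_D(F_1, \dots , F_n)$ on the single argument $F_j$, exhibiting it as precomposition of $F_j$ with a profunctor built from the \emph{remaining} data, and then to read off $R^j_\theta$ as the corresponding right extension. (At a higher level this is unsurprising: $\theta_D$ is a composite of coends, hence cocontinuous in each variable, and a cocontinuous functor between presheaf categories has a right adjoint; the profunctor route has the advantage of producing an explicit formula.)

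First I would fix the co-presheaves $F_i$ for $i \ne j$ and factor the external product through the $j$-th coordinate. Explicitly, I claim there is an isomorphism
\[
F_1 \times \dots \times F_n \;\cong\; F_j \circ Q,
\]
where $Q \colon \Ccat^n \slashedrightarrow \Ccat$ is the profunctor sending $(d,(c_1, \dots , c_n))$ to $\Ccat(d,c_j) \times \prod_{i \ne j} F_i(c_i)$; the isomorphism is an instance of density (the co-Yoneda lemma) in the $j$-th variable. The point is that $Q$ depends only on the fixed family $\{F_i\}_{i \ne j}$, not on $F_j$. Setting
\[
M_j \;:=\; Q \circ \Ccat^n(1_{\Ccat^n},i) \circ \Ccat(\theta,1_\Ccat) \colon \Ccat \slashedrightarrow \Ccat ,
\]
associativity of composition in $\Prof$ gives a canonical isomorphism $\theta_D(F_1, \dots , F_n) \cong F_j \circ M_j$, natural in $F_j$. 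Thus, as a functor of $F_j$ alone, $\theta_D$ is exactly precomposition $(-) \circ M_j \colon \Prof(\Ccat,\1) \to \Prof(\Ccat,\1)$ in the profunctor bicategory.

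Second, closedness of $\Prof$ supplies a right adjoint to $(-) \circ M_j$, namely the right extension along $M_j$, which sends a co-presheaf $G$ to $d \mapsto \int_{c \in \Ccat} \Set(M_j(d,c), G(c))$. I would define $R^j_\theta(G,F_1, \dots , F_{j-1},F_{j+1}, \dots , F_n)$ to be this right extension, and the adjunction then yields, via $[\Ccat \to \Set] = \Prof(\Ccat,\1)$ and $\theta_D(F_1, \dots , F_n) \cong F_j \circ M_j$, the required isomorphism
\[
[\Ccat \to \Set]\!\left(\theta_D(F_1, \dots , F_n),G\right) \cong [\Ccat \to \Set]\!\left(F_j, R^j_\theta(G,F_1, \dots , F_{j-1},F_{j+1}, \dots , F_n)\right).
\]
Unwinding $M_j$ through the coend description of \cref{rem:partial_day_as_coend} (using density once more and the Yoneda lemma to absorb $\Ccat(\theta(\vec d),-)$) produces the explicit formula
\[
R^j_\theta(G,\dots)(c) = \int_{\vec d \in \Dcat} \Set\!\left(\Ccat(c,d_j) \times \prod_{i \ne j} F_i(d_i),\, G(\theta(\vec d))\right).
\]

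What remains is naturality bookkeeping: both sides are functors on $([\Ccat \to \Set]^n)\op \times [\Ccat \to \Set]$, with naturality in $F_j$ and $G$ immediate from the adjunction, and naturality in the fixed $F_i$ ($i \ne j$) following from the functorial dependence of $Q$, hence of $M_j$ and of its right extension, on those arguments. I expect the main obstacle to be the first step: one must verify that the factorization genuinely isolates $F_j$ and is compatible with the inclusion $i \colon \Dcat \hookrightarrow \Ccat^n$ of the domain of the partial operation, so that $M_j$ is a well-defined profunctor independent of $F_j$. Once this is in hand, everything else reduces to the standard closedness of $\Prof$ together with routine Fubini and Yoneda manipulations of coends.
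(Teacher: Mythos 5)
Your argument is correct, and each of its steps checks out: the co-Yoneda factorisation $F_1 \times \dots \times F_n \cong F_j \circ Q$ holds (the coend $\int^{d \in \Ccat} F_j(d) \times \Ccat(d,c_j)$ contracts to $F_j(c_j)$), associativity of composition in $\Prof$ then gives $\theta_D(F_1,\dots,F_n) \cong F_j \circ M_j$ naturally in $F_j$ with $M_j$ independent of $F_j$, the right extension you quote is indeed right adjoint to $(-) \circ M_j$, and unwinding it yields your explicit end, which agrees with the paper's formula after Yoneda contraction. However, your route is organised genuinely differently from the paper's. The paper (mimicking \cite[Remark 6.2.4]{fosco}) writes down $R^j_\theta$ as an explicit end over $\vec{c} \in \Ccat^n$, $\vec{d} \in \Dcat$ \emph{first}, and then verifies the adjunction by a direct chain of (co)end manipulations --- hom-continuity, Fubini, currying to isolate $F_j$, Yoneda --- with naturality carried along by the chain; it never isolates the dependence on $F_j$ at the level of profunctors. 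Your approach modularises this: the currying/Fubini work is packaged once and for all into the closedness of $\Prof$ (right extensions exist because $\Set$ is complete), and the problem-specific content reduces to the factorisation $\theta_D(\vec{F}) \cong F_j \circ M_j$. What you gain is a conceptually immediate existence proof and a more compact formula for $R^j_\theta$ --- your contracted end $\int_{\vec{d} \in \Dcat}\left[\left(\Ccat(c,d_j) \times \prod_{i \neq j} F_i(d_i)\right) \to G(\theta(\vec{d}))\right]$ is visibly closer to the separating implication computed in \cref{ex:sep_logic_cont} than the paper's uncontracted formula. What the paper's computation gains is self-containedness: it does not appeal to bicategorical closedness as a black box, and the entire proof is a single displayed calculation. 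One last point: the ``main obstacle'' you flag at the end is not an obstacle at all. The profunctor $Q$ never interacts with the inclusion $i \colon \Dcat \hookrightarrow \Ccat^n$; your factorisation happens entirely in the last leg $F_1 \times \dots \times F_n \colon \Ccat^n \slashedrightarrow \1$ of the defining composite of \cref{df:partial-Day-extension}, and $M_j = Q \circ \Ccat^n(1_{\Ccat^n},i) \circ \Ccat(\theta,1_\Ccat)$ is manifestly a well-defined profunctor not depending on $F_j$, so nothing further needs checking there.
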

	\begin{proof}
		Our proof mimics the argument given in, for instance, \cite[Remark 6.2.4]{fosco}.  We take $R^j_\theta(G, F_1, \dots , F_{j-1}, F_{j+1} , \dots , F_n) \colon \Ccat \to \Set$ to be the co-presheaf where $R^i_\theta(G, F_1, \dots , F_{j-1}, F_{j+1} , \dots , F_n)(a)$ is given by the end:
		\[
		 \int_{\vec{c}\in \Ccat^n, \vec{d} \in \Dcat} \left[\left( \Ccat^n(\vec{c}\,[a/c_j],\vec{d}) \times \Ccat(\theta(\vec{d}),c_j) \times \prod_{i \neq j} F_i(c_i) \right) \to G(c_j)\right],
		\]
		where $\vec{c} \, [a/c_j]$ represents the tuple $(c_1, \dots , c_{j-1},a,c_{j+1}, \dots , c_n)$ and $[A \to B]$ represents the set of functions from a set $A$ to a set $B$ (i.e.\ the internal hom in $\Set$).
		
		The desired isomorphism is then obtained via `(co-)end yoga':
		\begin{align*}
			& [\Ccat \to \Set]\left(\theta_D(F_1, \dots , F_n),G\right) \\
		 \cong & \int_{a \in \Ccat} [\theta_D(F_1, \dots , F_n)(a) \to G(a)], \\
			 \cong & \int_{a \in \Ccat} \left[\left(\int^{\vec{d} \in \Dcat, \vec{c}\in \Ccat^n} \Ccat(\theta(\vec{d}),a) \times  \Ccat^n(\vec{c},\vec{d}) \times \prod_{c_i \in \vec{c}} F_i(c_i) \right) \to G(a)\right], \\
			 \cong & \int_{a \in \Ccat} \int_{\vec{d} \in \Dcat, \vec{c} \in \Ccat^n} \left[\left( \Ccat(\theta(\vec{d}),a) \times  \Ccat^n(\vec{c},\vec{d}) \times \prod_{c_i \in \vec{c}} F_i(c_i) \right) \to G(a)\right], \\
			 \cong & \int_{\vec{d}\in \Dcat, a, c_1 , \dots , c_n\in \Ccat}\left[F_j(c_j) \to \left[\left(\Ccat(\theta(\vec{d}),a) \times \Ccat^n(\vec{c},\vec{d}) \times \prod_{{{c_i \in \vec{c}}, { i \neq j}}} F_i(c_i)\right) \to G(a)\right]\right], \\
			 \cong & \int_{c_j \in \Ccat} \left[F_j(c_j) \to \int_{\substack{{\vec{d} \in \Dcat,} \\ { \vec{c}\,[a/c_j] \in \Ccat^n}}} \left[\left(\Ccat(\theta(\vec{d}),a) \times \Ccat^n(\vec{c},\vec{d}) \times \prod_{{{c_i \in \vec{c}}, { i \neq j}}} F_i(c_i)\right) \to G(a)\right] \right], \\
			 \cong & \int_{c_j \in \Ccat} \left[F_j(c_j) \to R^j_\theta(G, F_1 , \dots, F_{i-1}, F_{i+1}, \dots , F_n)(c_j)\right], \\
			 \cong & [\Ccat \to \Set] (F_j, R^j_\theta(G, F_1 , \dots, F_{j-1}, F_{j+1}, \dots , F_n)),
		\end{align*}
		where we have used \cite[Corollary 1.2.8 \& Theorem 1.3.1]{fosco} and \cref{rem:partial_day_as_coend}.
	\end{proof}
	\begin{ex}[Separation logic, continued]\label{ex:sep_logic_cont}
	In addition to the separating conjunction discussed in \cref{ex:sep_logic}, separation logic also has a \emph{separating implication} $\phi\sepimp \psi$ , where $-\ast \phi \dashv \phi \sepimp -$ (see \cite[\S 2]{separation}).  The right adjoint $\phi \sepimp -$ to $- \ast \phi$ can be obtained by an application of \cref{prop:day_ext_closed},  which yields that
	\begin{align*}
	(\phi \sepimp \psi)(h) & = \int_{\substack{{h_1, h_2 \in \H} \\ {h'_1,h'_2 \in \Dcat}}} \left[\left(\H^2((h_1,h),(h'_1,h'_2)) \times \H(h'_1 \sqcup h'_2, h_2) \times \phi(h_1)\right) \to \psi(h_2)\right], \\
	& \cong \int_{h' \in \H, h' \# h} [\phi(h') \to \psi(h' \sqcup h)].
	\end{align*}
		That is to say, we recover the standard interpretation of the separating implication where $h \Vdash \phi\sepimp \psi$ if and only if, for every $h'$ with $h' \# h$, if $h' \Vdash \phi$ then $h' \sqcup h \Vdash \psi$.  
	\end{ex}
	\subsection{Replacing partial operations by spans -- concluding remarks}\label{remarks_on_spans}
				A reader may wonder why we have chosen not to define the composite of partial functors as the outside span in the diagram
				\[
				\begin{tikzcd}
						\Ecat \times_\Dcat \Ecat' \ar[hook]{d} \ar{r}{G'} & \Ecat \ar{r}{F} \ar[hook]{d} & \Ccat \\
						\Ecat' \ar{r}[']{G} \ar[hook]{d}  & \Dcat & \\
						\Dcat',
						\arrow["\lrcorner"{anchor=center, pos=0.125}, draw=none, from=1-1, to=2-2]
					\end{tikzcd}
				\]
				where $G'$ is the (strict) pullback of $G$ along $\Ecat \hookrightarrow \Dcat$, i.e.\ $\Ecat \times_\Dcat \Ecat' \subseteq \Ecat'$ is the subcategory consisting of those objects and arrows whose image under $G$ lies in $\Ecat \subseteq \Dcat$.  (Note that if $G \colon \Ecat' \to \Dcat$ factors through $\Ecat$, as in \cref{eq:composition_of_partial_functors}, then the two notions of composite for partial functors coincide.)  
				
				The issue lies in the fact that (strict) pullback squares of functors are not \emph{exact} in the sense of \cite{guitart-exact}, meaning that the canonical natural transformation, or \emph{mate}, between the corresponding profunctors
					\[\begin{tikzcd}
						{\Ecat \times_\Dcat \Ecat'} & \Ecat \\
						{\Ecat'} & \Dcat
						\arrow["{\Ecat'(1,F')}"', "\shortmid"{marking}, from=1-1, to=2-1]
						\arrow["{\Ecat(G',1)}"', "\shortmid"{marking}, from=1-2, to=1-1]
						\arrow["{\Dcat(1,F)}", "\shortmid"{marking}, from=1-2, to=2-2]
						\arrow["{\Dcat(G,1)}", "\shortmid"{marking}, from=2-2, to=2-1]
						\arrow["\varepsilon", Rightarrow, from=1-1, to=2-2]
					\end{tikzcd}\]
					is not a natural isomorphism in general.  This means that, if we were to perform an identical argument to \cref{lem:partial-multi-composite} and \cref{thm:day-ext-partial}, we would only obtain a \emph{lax} morphism of operads.  More explicitly:
					\begin{enumerate}
						\item Given partial operations $\theta \colon \Ccat^n \rightharpoondown \Ccat, \theta_i \colon \Ccat^{k_i} \rightharpoondown \Ccat$ (whose domains we no longer require to be full subcategories), we define a new multi-composite $\circ^\pb$ where $\theta \circ^\pb (\theta_1 , \dots , \theta_n)$ is given by the outside span in the diagram
						\[\begin{tikzcd}[column sep=large]
							\Ecat & \Dcat & \Ccat \\
							{\Dcat_1 \times \dots \times \Dcat_n} & {\Ccat^n} \\
							{\Ccat^{k_1 + \dots + k_n}}.
							\arrow["{(\theta_1, \dots ,\theta_n)|_\Ecat}", from=1-1, to=1-2]
							\arrow["{i'}"', hook, from=1-1, to=2-1]
							\arrow["\lrcorner"{anchor=center, pos=0.125}, draw=none, from=1-1, to=2-2]
							\arrow["\theta", from=1-2, to=1-3]
							\arrow["i", hook, from=1-2, to=2-2]
							\arrow["{(\theta_1, \dots , \theta_n)}"', from=2-1, to=2-2]
							\arrow["{(i_1, \dots, i_n)}"', hook, from=2-1, to=3-1]
						\end{tikzcd}\]
						With this definition of multi-composite, we obtain a $\Cat$-operad $\End(\Ccat)_\partial^\pb$ which differs from the partial $\Cat$-operad $\End(\Ccat)_\partial$ given in \cref{df:partial_endo_operad} only in the definition of the multi-composite.
						\item Let $F_1^1, \dots , F_n^{k_n}$ be co-presheaves, and consider the diagram
						\begin{equation*}\label{eq:lax-preservation-of-multicomp}
							\begin{tikzcd}[column sep=large 
                            ]
								\Ecat && \Dcat & \Ccat \\
								{\Dcat_1 \times \dots \times \Dcat_n} && {\Ccat^n} \\
								{\Ccat^{k_1 + \dots k_n}} \\
								\1,
								\arrow["{\Dcat_1 \times \dots \times \Dcat_n(\id , i')\, }"', "\shortmid"{marking}, from=1-1, to=2-1]
								\arrow["\varepsilon", 
								shorten <=10pt, shorten >=10pt, 
								Rightarrow, from=1-1, to=2-3]
								\arrow["{\Dcat((\theta_1, \dots, \theta_n)|_\Ecat, \id)}"', "\shortmid"{marking}, from=1-3, to=1-1]
								\arrow["{\Ccat^n(\id,i)}", "\shortmid"{marking}, from=1-3, to=2-3]
								\arrow["{\Ccat(\theta, \id)}"', "\shortmid"{marking}, from=1-4, to=1-3]
								\arrow["{\Ccat^{k_1 + \dots + k_n}(\id, (i_1, \dots , i_n))} \, "', "\shortmid"{marking}, from=2-1, to=3-1]
								\arrow["{\Ccat^n((\theta_1, \dots, \theta_n),\id)}", "\shortmid"{marking}, from=2-3, to=2-1]
								\arrow["{F_1^1 \times \dots \times F_n^{k_n}}\, "', "\shortmid"{marking}, from=3-1, to=4-1]
							\end{tikzcd}
						\end{equation*}
						where $\varepsilon$ is the induced mate.  The outside composite profunctor $\Ccat \slashedrightarrow \1$, along the top horizontal followed by the leftmost vertical, corresponds to $(\theta \circ^\pb (\theta_1, \dots , \theta_n))_D(F_1^1, \dots , F_n^{k_n})$, whereas the other composite is naturally isomorphic to $(\theta_D) \circ ({\theta_1}_D , \dots , {\theta_n}_D)(F_1^1 , \dots , F_n^{k_n})$. The mate $\varepsilon$ yields a natural transformation 
						\[\eta_{\theta, \theta_1, \dots , \theta_n} \colon (\theta_D) \circ ({\theta_1}_D , \dots , {\theta_n}_D) \to (\theta \circ^\pb (\theta_1, \dots , \theta_n))_D\]
						which forms the underlying data of only a lax-morphism of $\Cat$-operads $(-)_D \colon \End(\Ccat)_\partial^\pb \to \End([\Ccat \to \Set])\op$.  Thus, postcomposing by the Day extension $(-)_D \colon \End(\Ccat)_\partial^\pb \to \End([\Ccat \to \Set])\op$ a lax algebra structure on $\Ccat$ is sent to an oplax algebra structure on $[\Ccat \to \Set]$, but pseudo-algebras are not necessarily preserved.
					\end{enumerate}

					In contrast, \emph{comma squares} are exact.  However, this means abandoning partial operations in favour of arbitrary spans.  We briefly sketch how our constructions generalise for spans.  We define a $\Cat$-operad of spans $\Span(\Ccat)$ where $\Span(\Ccat)(n)$ is the category whose objects are spans between $\Ccat^n$ and $\Ccat$, and whose morphisms are natural transformations of the right leg of a span with common left leg, as in the diagrams
					\[
					\begin{tikzcd}
						\Ccat^n & \ar{l}[']{F} \Dcat \ar{r}{G} & \Ccat
					\end{tikzcd}, \quad 
					\begin{tikzcd}
						\Ccat^n & \Dcat & \Ccat.
						\arrow["F"', from=1-2, to=1-1]
						\arrow[""{name=0, anchor=center, inner sep=0}, "G", curve={height=-18pt}, from=1-2, to=1-3]
						\arrow[""{name=1, anchor=center, inner sep=0}, "{G'}"', curve={height=18pt}, from=1-2, to=1-3]
						\arrow["\alpha", shorten <=3pt, shorten >=3pt, Rightarrow, from=0, to=1]
					\end{tikzcd}
					\]
					The unit is the identity span on $\Ccat$, while the multi-composite of spans $(F,G) \in \Span(\Ccat)(n), (F_1,G_1) \in \Span(\Ccat)(k_1) , \dots , (F_n,G_n) \in \Span(\Ccat)(n)$ is given by the outside span in the diagram
					\[
					\begin{tikzcd}[column sep=large]
						{F \downarrow (G_1, \dots , G_n)} & \Dcat & \Ccat \\
						{\Dcat_1 \times \dots \times \Dcat_n} & {\Ccat^n} \\
						{\Ccat^{k_1 + \dots + k_n}},
						\arrow["{(G'_1, \dots , G'_n)}", from=1-1, to=1-2]
						\arrow["{F'}"', from=1-1, to=2-1]
						\arrow["G", from=1-2, to=1-3]
						\arrow["F", from=1-2, to=2-2]
						\arrow["{(G_1, \dots , G_n)}"', from=2-1, to=2-2]
						\arrow["{(F_1, \dots, F_n)}"', from=2-1, to=3-1]
					\end{tikzcd}
					\]					
					where ${F \downarrow (G_1, \dots , G_n)}$ is the comma object (see \cite[\S II.6]{CWM}).  Emulating \cref{df:partial-Day-extension}, we define the \emph{Day extension of a span} $\Ccat^n \xleftarrow{F} \Dcat \xrightarrow{G} \Ccat$ as the functor $[\Ccat \to \Set]^n \to [\Ccat \to \Set]$ that sends co-presheaves $F_1, \dots, F_n \in [\Ccat \to \Set]$ to the profunctor
					\[
					\begin{tikzcd}[column sep = large]
						\Ccat \ar["\shortmid"{marking}]{r}{\Ccat(G,1_\Ccat)} & \Dcat \ar["\shortmid"{marking}]{r}{\Ccat^n(1_{\Ccat^n},F)}  & \Ccat^n \ar["\shortmid"{marking}]{r}{F_1 \times \dots \times F_n} & \1,
					\end{tikzcd}
					\]
					and a tuple of natural transformations $\alpha_i \colon F_i \Rightarrow G_i$ between co-presheaves to the transformation of profunctors
					\[\begin{tikzcd}[column sep=large]
						\Ccat & \Dcat \ar["\shortmid"{marking}]{r}{\Ccat^n(1_{\Ccat^n},F)} & {\Ccat^n} && {\1},
						\arrow["{\Ccat(G,1_\Ccat)}", "\shortmid"{marking}, from=1-1, to=1-2]
						\arrow[""{name=0, anchor=center, inner sep=0}, "{F_1 \times \dots \times F_n}", "\shortmid"{marking}, curve={height=-18pt}, from=1-3, to=1-5]
						\arrow[""{name=1, anchor=center, inner sep=0}, "{G_1 \times \dots \times G_n}"', "\shortmid"{marking}, curve={height=18pt}, from=1-3, to=1-5]
						\arrow["{\prod_{i=1}^n \alpha_i}", shift right=3, shorten <=5pt, shorten >=5pt, Rightarrow, from=0, to=1]
					\end{tikzcd}\]
					Following an identical argument to the above, and using that comma squares are exact, we obtain a pseudo-morphism of $\Cat$-operads 
					\[(-)_D \colon \Span(\Ccat) \to \End([\Ccat \to \Set])\op.\]
			\begin{ex}
				As a final example, we revisit the $@_a$ operation from hybrid logic discussed in \cref{ex:hybrid_logic}, and obtain it as the Day extension of a span in the unrooted case.  Let $\W$ be a (potentially unrooted) Kripke frame, and consider the span
				\[
				\begin{tikzcd}
					\W & \ar{l}[']{c_a} \W \ar[equal]{r} & \W
				\end{tikzcd}
				\]
				where $c_a \colon \W \to \W$ is the function that sends all of $\W$ to a chosen element $a \in \W$.  The Day extension of this span, applied to a proposition $\phi \in [\W \to \2]$, i.e.\ the composite profunctor $\phi \circ \W(1,c_a) \colon \W \slashedrightarrow \1$, is readily calculated to coincide with the proposition $@_a \phi$ described in \cref{ex:hybrid_logic}.
			\end{ex}

	\appendix
	\crefalias{section}{appendix}
	\section{Coherence axioms for morphisms of operads}\label{appendix:coherence}
	We return to fully stating the coherence axioms for a lax-morphism of $\Cat$-operads from \cref{df:pseudomorph_of_cat-operads}.  These are modelled on the standard coherence axioms for, say, pseudo-functors \cite[Definition 4.1]{benaboubicats}.  Recall that a lax-morphism $\omega \colon P \to Q$ consists of:
	\begin{enumerate}
		\item a functor $\omega_n \colon P(n) \to Q(n)$ for each $n \in \N$,
		\item a morphism $\lambda \colon 1 \to \omega_1(1) \in Q(1)$,
		\item and, for each $\theta \in P(n), \theta_1 \in P(k_1), \dots , \theta_n \in P(k_n)$, a morphism
		\[
		\eta_{\theta, \theta_1, \dots , \theta_n} \colon \omega_n(\theta) \circ (\omega_{k_1}(\theta_1), \dots , \omega_{k_n}(\theta_n)) \to \omega_{k_1 + \dots + k_n} (\theta \circ (\theta_1 , \dots , \theta_n)), 
		\]
		natural in $\theta, \theta_i$,
	\end{enumerate}
	These must satisfy the following coherence axioms.  Firstly, the composite
	\[\begin{tikzcd}[row sep=large]
		{\omega_n(\theta) \circ (\omega_{k_1}(\theta_1) \circ (\omega_{m_1^1}(\theta_1^1), \dots , \omega_{m_1^{k_1}}(\theta_1^{k_1})), \dots , \omega_{k_n}(\theta_n) \circ (\omega_{m_n^1}(\theta_n^1), \dots , \omega_{m_n^{k_n}}(\theta_n^{k_n}))} \\
		{\omega_n(\theta) \circ (\omega_{m_1^1 + \dots + m_1^{k_1}}(\theta_1 \circ (\theta_1^1 , \dots, \theta_1^{k_1})), \dots , \omega_{m_n^1 + \dots + m_n^{k_n}}(\theta_n \circ (\theta_n^1 , \dots , \theta_n^{k_n})))} \\
		{\omega_{m_1^1 + \dots + m_n^{k_n}}(\theta \circ(\theta_1 \circ (\theta_1^1, \dots, \theta_1^{k_1}), \dots , \theta_n\circ(\theta_n^1, \dots, \theta_n^{k_n})))}
		\arrow["{\circ\left(\id_{\omega_n(\theta)}, \eta_{\theta_1, \theta_1^1, \dots , \theta_1^{k_1}}, \dots, \eta_{\theta_n, \theta_n^1, \dots, \theta_n^{k_n}}\right)}"', from=1-1, to=2-1]
		\arrow["{\eta_{\theta, \theta_1 \circ (\theta_1^1, \dots, \theta_1^{k_1}), \dots , \theta_n \circ (\theta_n^1, \dots , \theta_n^{k_n})}}"', from=2-1, to=3-1]
	\end{tikzcd}\]
	is equal to the composite
	\[\begin{tikzcd}[row sep=large]
		{(\omega_n(\theta) \circ (\omega_{k_1}(\theta_1), \dots, \omega_{k_n}(\theta_n)) \circ (\omega_{m_1^1}(\theta_1^1), \dots , \omega_{m_n^{k_n}}(\theta_n^{k_n}))} \\
		{\omega_{k_1 + \dots + k_n} (\theta \circ (\theta_1 , \dots , \theta_n)) \circ (\omega_{m_1^1}(\theta_1^1), \dots, \omega_{m_n^{k_n}}(\theta_n^{k_n}))} \\
		{\omega_{m_1^1+ \dots m_n^{k_n}}((\theta \circ (\theta_1, \dots, \theta_n) \circ (\theta_1^1, \dots , \theta_n^{k_n}))}.
		\arrow["{\circ\left(\eta_{\theta, \theta_1, \dots , \theta_n},\id_{\omega_{m_1^1}(\theta_1^1)}, \dots, \id_{\omega_{m_n^{k_n}}(\theta_n^{k_n})}\right)}", from=1-1, to=2-1]
		\arrow["{\eta_{(\theta \circ (\theta_1, \dots, \theta_n)), \theta_1^1, \dots, \theta_n^{k_n}}}", from=2-1, to=3-1]
	\end{tikzcd}\]
	Note that the associativity axioms for $P$ and $Q$ ensure that the source and target of these composites agree.
	
	Secondly, the diagram
	\[\begin{tikzcd}[row sep = large]
		{\omega_n(\theta) \circ(1,\dots, 1)} & {\omega_n(\theta)} & {1 \circ \omega_n(\theta)} \\
		{\omega_n(\theta) \circ (\omega_1(1), \dots, \omega_1(1))} && {\omega_1(1) \circ \omega_n(\theta)} \\
		{\omega_n(\theta \circ (1, \dots, 1))} & {\omega_n(\theta)} & {\omega_n(1 \circ \theta)}
		\arrow[equals, from=1-1, to=1-2]
		\arrow["{\circ \left( \id_{\omega_n(\theta)}, \lambda, \dots ,\lambda \right)}"', from=1-1, to=2-1]
		\arrow[equals, from=1-2, to=1-3]
		\arrow[equals, from=1-2, to=3-2]
		\arrow["{\circ \left( \lambda , \id_{\omega_n(\theta)}\right)}", from=1-3, to=2-3]
		\arrow["{\eta_{\theta, 1,\dots, 1}}"', from=2-1, to=3-1]
		\arrow["{\eta_{1,\theta}}", from=2-3, to=3-3]
		\arrow[equals, from=3-1, to=3-2]
		\arrow[equals, from=3-2, to=3-3]
	\end{tikzcd}\]
	commutes.  The top and bottom horizontal equalities are provided by the unital axioms of $P$ and $Q$.

    \begin{prop}[cf.\ \cref{thm:day_ext_is_operadic}]\label{prop:coherence_of_ext}
        The Day extension from \cref{df:day-extension} satisfies the above coherence conditions.
    \end{prop}
    \begin{proof}
        We will content ourselves with demonstrating that the right-hand square in the above diagram commutes, i.e.\ that $\eta_{1,\theta} \circ \left( \lambda , \id_{\theta_D}\right)$ is the identity on $\theta_D$, as the other axioms follow by a similar manipulation of the coherence for bifunctors and bicategories. Recall that, in our context, the isomorphism $\lambda \colon 1_{[\Ccat \to \Set]} \to (1_\Ccat)_D = - \circ \Ccat(1_\Ccat,1_\Ccat) \in \End([\Ccat \to \Set])\op$ is given by the right unitor $\mathfrak{r}_{\Ccat,\1}$ of $\Prof$, while the isomorphism $\eta_{\theta, \theta_1, \dots , \theta_n} \colon \theta_D \circ ({\theta_1}_D, \dots {\theta_n}_D) \to ( \theta \circ (\theta_1, \dots , \theta_n))_D$ is given by the inverse to the composite $(- \ast \mathfrak{c}_{\theta, (\theta_1, \dots , \theta_n)} )
        \circ \mathfrak{a}_{\Ccat,\Ccat^n,\1} $, where $\mathfrak{a}_{\Ccat,\Ccat^n,\1}$ is the associator of $\Prof$ and the isomorphism 
        \[\mathfrak{c}_{\theta, (\theta_1, \dots , \theta_n)} \colon \Ccat^n((\theta_1, \dots , \theta_n), 1_{\Ccat^n}) \circ \Ccat(\theta,1_\Ccat) \to \Ccat(\theta \circ (\theta_1, \dots , \theta_n),1_\Ccat)\]
        is given by the bifunctoriality of $\Cat\op \to \Prof$.  We therefore desire to show that, for an operation $\theta \colon \Ccat^n \to \Ccat$, the following diagram of isomorphisms commutes in $\End([\Ccat \to \Set]) = \Prof(\Ccat,\1)$:
        \begin{equation}\label{eq:identity_coherence}
        \begin{tikzcd} 
            (1_\Ccat)_D \circ \theta_D (-) \ar[draw=none]{d}[marking, allow upside down]{=}
            \\[-10pt]
            (- \circ \Ccat(\theta,1_\Ccat))\circ \Ccat(1_\Ccat,1_\Ccat)  \ar{r}{\mathfrak{a}_{\Ccat,\Ccat,\1}}  \ar{rd}[']{\mathfrak{r}_{\Ccat,\1}} & - \circ (\Ccat(\theta,1_\Ccat) \circ \Ccat(1_\Ccat,1_\Ccat))  \ar{d}{- \ast \mathfrak{c}_{1_\Ccat,\theta}} \\
            &  - \circ \Ccat(1_\Ccat \circ \theta,1_\Ccat) &[-48pt] = 1_{[\Ccat \to \Set]} \circ \theta_D(-).
        \end{tikzcd}
        \end{equation}
        First, note that since $\Cat\op$ is a strict 2-category and the bifunctor $\Cat\op \to \Prof$ sends identities to identities on the nose, it follows from \cite[Definition 4.1(M.2)]{benaboubicats} that the isomorphism $\mathfrak{c}_{1_\Ccat,\theta} \colon \Ccat(\theta,1) \circ \Ccat(1,1) \to \Ccat(1 \circ \theta,1) = \Ccat(\theta,1)$ is given by the right unitor $\mathfrak{r}_{\Ccat,\1}$ as well.  The desired diagram \cref{eq:identity_coherence} now commutes as a consequence of the coherence conditions satisfied by the bicategory $\Prof$ (cf.\ \cite[Defintion 1.1]{benaboubicats} and \cite{kelly}).
    \end{proof}

	
		\renewcommand{\baselinestretch}{1}\normalsize

\end{document}